\begin{document} 
\newcommand{\spher}[1]{{#1}^{\Delta}}
\newcommand{\fs}{\spher{f}}
\newcommand{\Fs}{\spher{F}}
\newcommand{\A}{{\mathbb A}}
\newcommand{\B}{{\mathbb B}}
\newcommand{\C}{{\mathbb C}}
\newcommand{\Cc}{{{\mathcal C}_c}}
\newcommand{\Dp}{{D_1}}
\newcommand{\Dpr}{D_{1,\R}}
\newcommand{\G}{{\mathcal G}}
\renewcommand{\H}{{\mathbb H}}
\let\paragraf\S
\renewcommand{\S}{{\mathbb S}}
\newcommand{\N}{{\mathbb N}}
\newcommand{\Q}{{\mathbb Q}}
\newcommand{\Z}{{\mathbb Z}}
\renewcommand{\P}{{\mathbb P}}
\renewcommand{\O}{{\mathcal O}}
\newcommand{\R}{{\mathbb R}}
\newcommand{\rc}{\subset}
\newcommand{\rank}{\mathop{rank}}
\newcommand{\supp}{\mathop{supp}}
\newcommand{\trace}{\mathop{tr}}
\newcommand{\tensor}{\otimes}
\newcommand{\dimc}{\mathop{dim}_{\C}}
\newcommand{\Lie}{\mathop{Lie}}
\newcommand{\Spec}{\mathop{Spec}}
\newcommand{\Auto}{\mathop{{\rm Aut}_{\mathcal O}}}
\newcommand{\End}{\mathop{{\rm End}}}
\newcommand{\alg}[1]{{\mathbf #1}}
\let\Realpart\Re
\renewcommand{\Re}{\mathop{\Realpart e}}

\let\Impart\Im
\renewcommand{\Im}{\mathop{\Impart m}}

\newcommand{\Image}{\mathop{image}}
\newtheorem*{convention}{Convention}
\newtheorem*{claim}{Claim}
\newtheorem*{Conjecture}{Conjecture}
\newtheorem*{SpecAss}{Special Assumptions}
\newtheorem*{observation}{Observation}
\newtheorem*{fact}{Fact}
\newtheorem{theorem}{Theorem}[section]
\newtheorem{proposition}[theorem]{Proposition}
\newtheorem{corollary}[theorem]{Corollary}
\newtheorem{lemma}[theorem]{Lemma}
\theoremstyle{definition}
\newtheorem{definition}[theorem]{Definition}
\newtheorem{example}[theorem]{Example}
\newtheorem{remark}[theorem]{Remark}
\numberwithin{equation}{section}
\def\labelenumi{\rm(\roman{enumi})}
\title[Runge pairs]{%
On Runge pairs and topology of axially symmetric domains
}
\author {Cinzia Bisi \& J\"org Winkelmann}
\begin{abstract}
We prove a Runge theorem for and describe the homology of
axially symmetric open subsets of $\H$.
\end{abstract}
\subjclass{30G35}%
\address{%
Cinzia Bisi \\
Department of Mathematics and Computer Sciences\\
Ferrara University\\
Via Machiavelli 30\\
44121 Ferrara \\
Italy
}
\email{bsicnz@unife.it \newline
ORCID: 0000-0002-4973-1053
}
\address{%
J\"org Winkelmann \\
Lehrstuhl Analysis II \\
Fakult\"at f\"ur Mathematik \\
Ruhr-Universit\"at Bochum\\
44780 Bochum \\
Germany
}
\email{joerg.winkelmann@rub.de\newline
  ORCID: 0000-0002-1781-5842
}
\thanks{
The two authors were partially supported by GNSAGA of INdAM.
C. Bisi was also partially supported by PRIN \textit{Variet\'a reali e complesse:
geometria, topologia e analisi armonica}. 
}
\maketitle

\section{Introduction}
Approximation theory plays a fundamental role in complex analysis, holomorphic dynamics, the theory of minimal surfaces in Euclidean spaces and in many other related fields of mathematics.
In this paper, our goal is to study quaternionic analogs of the classical
complex Runge theory, in particular analogs of the classical
topological characterization of domains in the complex plane
on which holomorphic functions may be approximated
by entire functions.
We recall that the classical theory of holomorphic approximation started in 19th century with the amazing results of Runge and Weierstrass (1885) and continued in the 20th century with the work
 of Oka and Weil, Mergelyan, Vituskin and others : here we prove the analog of Behnke and Stein theorem in the more modern quaternionic setting,
 hoping that this paper will bring a new stimulus for future developments in this important area of mathematics.

Throughout this paper the integers, real, complex and quaternionic
numbers are denoted by $\Z$, $\R$, $\C$ and $\H$ respectively.
We recall that $\H$ is a skew field, a  four-dimensional 
associative $\R$-algebra
with basis $1,I,J,K$ subject to the rules $I^2=J^2=K^2=-1$,
$IJ+JI=IK+KI=KJ+JK=0$, $IJK=-1$.

The set of imaginary units $\S=\{q\in \H:q^2=-1\}$
is a real two-dimensional sphere, because
\[
\S=\{xI+yJ+zK: x^2+y^2+z^2=1\}.
\]

Our goal is to study (slice) {\em regular functions} on domains in
$\H$ which are the
analog of holomorphic functions on $\C$.

\begin{definition}
Let $\Omega$ be an open subset of $\H$ with $\Omega\cap\R\ne\{\}$. 
A real differentiable function $f \colon \Omega \to \H $ is said to be 
(slice) regular if, $\forall \,\, I \in \S$ 
its restriction $f_{I}$ to the complex line 
$\C_I= \mathbb{R} + \mathbb{R} I$ passing through the origin and containing $1$ and $I$ is holomorphic on $\Omega \cap \C_I$.
\end{definition}

This notion was  introduced by Gentili and Struppa \cite{GS06,GS07}.

For a ball in $\H$ centered at the origin
 regularity is the same as the condition that the function
can be represented by a convergent power series
\[
f(q)=\sum_{k=0}^\infty q^k a_k.
\]

In the last decade the theory of slice regular functions has been investigated in many directions, see, as samples, the papers \cite{bisigentili}, \cite{bisigentilitrends}, \cite{bs12}, \cite{bisistoppato}, \cite{bs17}, \cite{BG}, \cite{AnB}, \cite{AB},
 \cite{BW}, \cite{BW1}.
 
In this article, we call an open subset $D\subset\C$
{\em symmetric} if it is invariant under complex conjugation.
An open subset $\Omega\subset\H$ is called {axially symmetric}
if it is invariant under all $\R$-algebra automorphisms of $\H$.
This is equivalent to the condition that for any $x,y\in\R$,
$I,J\in\S$ the condition $x+yI\in\Omega$ holds if and only if
$x+yJ\in\Omega$.

There is a one-to-one correspondence between symmetric
open subsets $D\subset\C$ and axially symmetric open
subsets $\Omega_D\subset\H$ which may described as follows.

Given an axially symmetric open subset $\Omega\subset \H$, 
we may choose an element $I\in\S$ and define 
$D\subset\C$ as
\[
D=\{x+yi:x+yI\in\Omega, x,y\in\R\}.
\]
Conversely, given a symmetric open subset $D\subset\C$, we 
define the corresponding axially symmetric subset $\Omega\subset\H$
(which we often denote as $\Omega_D$) via
\[
\Omega=\{x+yI: I\in\S,x,y\in\R, x+yi\in D\}.
\]

\newcommand{\Hc}{\H\tensor_\R \C}
Let $D$ be a symmetric open subset of $\C$. Then
a ``stem function'' on $D$ is a holomorphic function $F:D\to\Hc$
such that $F(\bar z)=\overline{F(z)}$ for all $z\in D$.
Here ``holomorphic'' is to be understood with respect to the complex
structure on $\Hc$ induced by the complex structure on the
second factor of the tensor product.

Given a symmetric open subset $D\subset\C$
with $D\cap\R\ne\{\}$ and its associated
axially symmetric open subset $\Omega_D$ we have a one-to-one
correspondence between slice regular functions on $\Omega_D$ and
``stem functions on $D$''.

Given a stem function $F:D\to\Hc$, we write $F$ as
\[
F(z)=F_1(z)\tensor 1 + F_2(z)\tensor\iota
\]
with $F_i:D\to\H$ and define
\[
f(x+yI)=F_1(x+yi)+IF_2(x+yi)\quad\quad(x,y\in\R, I\in\S)
\]

Conversely, given $f:\Omega\to\H$, we fix an element $I\in\S$ and
define
\begin{align*}
F_1(x+yi) &=\frac 12 \left( f(x+yI) +f(x-yI) \right) \\
F_2(x+yi) &= -I\frac 12  \left( f(x+yI) -f(x-yI) \right) \\
\end{align*}
It can be shown (using the ``representation formula'') that the $F_i$
are independent of the choice of $I$, see \cite{ghiloniperotti}.

For arbitrary axially symmetric domains in $\H$ (for which the intersection
with the real axis may be empty) we use the definition below.

\begin{definition}
  Let $D$ be a symmetric domain in $\C$ and let $\Omega_D$ be its
  associated axially symmetric domain in $\H$, i.e.,
  \[
  \Omega_D=\{x+yJ: x,y \in\R, J\in\S, x+yi\in D\}
  \]
  A function $f:\Omega_D\to \H$ is {\em regular} if it is
  induced by a holomorphic stem function $F:D\to\Hc$.
\end{definition}

Our main result is the following:
\begin{theorem}
Let $D\subset\Dp$ be symmetric open subsets of $\C$ and let
$\Omega_D\subset\Omega_{\Dp}$ 
be the corresponding axially symmetric open subsets
in $\H$.

Then the following are equivalent:
\begin{enumerate}
\item
$D\subset \Dp$ is a Runge pair, i.e., every holomorphic function on $D$
can be approximated by holomorphic functions on $\Dp$
(uniformly on compact sets),
\item
$\Omega_D$ is Runge in $\Omega_{\Dp}$ in the sense that every regular
function on  $\Omega_D$ can be approximated (uniformly on compact sets)
by regular functions on $\Omega_{\Dp}$.
\item
  $i_*:H_1(D)\to H_1(\Dp)$ is injective, where
  $i_*$ denotes the homology group homomorphism induced by the
  inclusion map $i:D\to\Dp$.
\item
  $i_*: H_k(\Omega_D)\to H_k(\Omega_{\Dp})$ is injective for $k\in\{1,3\}$
  where $i_*$ is the homomorphism induced by the inclusion map
  $i:\Omega_D\to\Omega_{\Dp}$.
\item
Every bounded connected component of $\C\setminus D$ intersects
$\C\setminus\Dp$.
\item
Every bounded connected component of $\H\setminus\Omega_D$ intersects
$\H\setminus\Omega_{\Dp}$.
\end{enumerate}
\end{theorem}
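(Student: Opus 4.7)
The strategy is to establish the six-way equivalence through three bridges. The classical planar Runge theory supplies $(1) \Leftrightarrow (3) \Leftrightarrow (5)$; the stem function correspondence transports $(1) \Leftrightarrow (2)$; and a topological analysis via Alexander duality gives $(5) \Leftrightarrow (6) \Leftrightarrow (4)$.

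On the complex side, the classical Runge approximation theorem yields $(1) \Leftrightarrow (5)$, and since $H_1(D)$ of an open $D \rc \C$ is free abelian, generated by small loops around one point per bounded component of $\C \setminus D$, injectivity of $i_*$ on $H_1$ is precisely the statement that no such loop bounds in $\Dp$, namely $(5)$; this gives $(3) \Leftrightarrow (5)$.

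For $(1) \Leftrightarrow (2)$, I invoke the stem function correspondence. A regular $f \colon \Omega_D \to \H$ is equivalent data to a holomorphic $F \colon D \to \Hc$ with $F(\bar z) = \overline{F(z)}$. The rotation $q \mapsto \Re q + |\Im q|\, i$ carries compact subsets of $\Omega_D$ onto symmetric compact subsets of $D$ (and every symmetric compactum in $D$ lifts to a rotation-invariant compactum in $\Omega_D$), turning uniform approximation on $\Omega_D$ into uniform approximation on $D$. Since $\Hc \cong \C^4$ as a complex vector space and the symmetry constraint is preserved under uniform limits (any approximant $F'$ can be symmetrized by averaging with $z \mapsto \overline{F'(\bar z)}$), this reduces the Runge condition for stem functions to componentwise scalar complex Runge approximation on $\Dp$, which is $(1)$.

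The quaternionic topological statements are the heart of the argument. Since $\H \setminus \Omega_D$ is itself axially symmetric, its connected components are in bijection with the complex-conjugation orbits on the set of connected components of $\C \setminus D$, preserving boundedness and the condition of meeting the larger complement; this yields $(5) \Leftrightarrow (6)$. For $(6) \Leftrightarrow (4)$ I would apply Alexander duality in $S^4 = \H \cup \{\infty\}$: one obtains $\tilde H_3(\Omega_D) \cong \tilde H^0((\H \setminus \Omega_D) \cup \{\infty\})$, free abelian on the bounded components of $\H \setminus \Omega_D$, and analogously $\tilde H_1(\Omega_D) \cong \tilde H^2((\H \setminus \Omega_D) \cup \{\infty\})$, which a direct shape analysis identifies with the contribution from bounded components disjoint from $\R$ (those are shaped like tubular neighborhoods of $2$-spheres and carry $H^2 = \Z$; the $\R$-meeting components are $4$-ball-like and carry no $H^2$). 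Naturality of Alexander duality under the inclusion $\Omega_D \hookrightarrow \Omega_{\Dp}$ then identifies injectivity of $i_*$ on $H_3$ with $(6)$, while injectivity on $H_1$ follows automatically, since any bounded component of $\H \setminus \Omega_{\Dp}$ contained in a disjoint-from-$\R$ bounded component of $\H \setminus \Omega_D$ is itself disjoint from $\R$. The main obstacle will be the topological bookkeeping: verifying the precise shapes of the bounded components of $\H \setminus \Omega_D$ produced by the axially symmetric rotation, and confirming that naturality of Alexander duality translates the homological injectivities in $(4)$ into the clean geometric condition $(6)$.
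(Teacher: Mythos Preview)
Your reductions $(1)\Leftrightarrow(2)$ via stem functions and $(5)\Leftrightarrow(6)$ via the bijection between symmetric components of $\C\setminus D$ and components of $\H\setminus\Omega_D$ match the paper's. The genuinely different move is your attack on $(4)\Leftrightarrow(6)$ through Alexander duality in $S^4$, whereas the paper computes $H_*(\Omega_D)$ directly by Mayer--Vietoris (covering $\Omega_D$ by the rotation of $D^+\setminus\R$ and a tube around $D_\R$), obtains natural short exact sequences relating $H_1(\Omega_D)$ and $H_3(\Omega_D)$ to $H_1(D^+)$ and $\tilde H_0(D_\R)$, and then deduces the injectivity equivalences by the snake lemma.

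Your $H_3$-part is clean: $\tilde H_3(\Omega_D)\cong\check{\tilde H}^0\bigl((\H\setminus\Omega_D)\cup\{\infty\}\bigr)$ and naturality of the duality turns injectivity of $i_*$ on $H_3$ into the component condition $(6)$ exactly as you say. The gap is in the $H_1$-part. You assert that the $\R$-meeting bounded components of $\H\setminus\Omega_D$ are ``4-ball-like'' and carry no $\check H^2$, but this is not obvious and is in fact where all the work lies. A symmetric compact connected $C\subset\C$ with $C\cap\R\neq\emptyset$ can be far from a disk: if $C$ is a symmetric annulus then $\Omega_C$ is a spherical shell $\simeq S^3$; if $C$ is a symmetric theta-graph then $\Omega_C$ has nontrivial $\check H^1$; in general $C$ can have arbitrarily many complementary regions. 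Showing $\check H^2(\Omega_C)=0$ for all such $C$, and likewise for the $\infty$-component, requires an honest computation --- essentially a Mayer--Vietoris argument on $\Omega_C$ of the same flavor as the paper's computation for the open set. Your one-line justification (``injectivity on $H_1$ follows automatically, since \dots'') presupposes exactly this vanishing without proving it. So the Alexander-duality route is viable and elegant, but the ``topological bookkeeping'' you flag as the main obstacle is not bookkeeping: it is the substance of the proof, comparable in difficulty to the paper's direct homology computation.
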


The equivalences $(i)\iff(iii)\iff (v)$ are classical
(see Proposition~\ref{complex-runge} below).
The implication $(vi)\Rightarrow(ii)$ has been proven before
by Colombo, Sabadini and Struppa (Theorem 4.13 of \cite{CSS}).

The equivalence $(i)\iff(ii)$ is Proposition~\ref{q-c-runge}.
The equivalence $(iii)\iff(iv)$ is Proposition~\ref{equiv-homology}.

The equivalence $(v)\iff (vi)$ is an easy consequence of the fact
that each bounded connected component $C$ of $D,$ resp.~$\Dp,$
corresponds to a bounded connected component $\Omega_C$
of $\Omega_D,$ resp.~$\Omega_{\Dp},$ via
\[
\Omega_C=\{ x+yI; x,y,\in\R, x+yi\in C, I\in\S \}.
\]

In the context of
proving our results on Runge pairs
we obtain a precise description of the homology of $\Omega_D$
in terms of the topology of $D$. (see Proposition~\ref{homology}.)

\subsection{Examples.}
\begin{example}
$\C^*$ is a symmetric domain with 
corresponding axially symmetric domain $\H^*$. 

$\H^*$ is simply-connected, but not Runge in $\H$, because
$i_*:H_3(\H^*)\simeq\Z\to H_3(\H)=\{0\}$ is not injective.
\end{example}

\begin{example}
$\C\setminus\R$ is a symmetric domain with corresponding 
axially symmetric domain 
$\Omega=\H\setminus\R$. The domain $\Omega$ is homotopic to
the $2$-sphere, thus simply-connected but not contractible. However,
$\Omega$ is Runge in $\H$: $H_1(\Omega)$ and $H_3(\Omega)$ vanish both,
hence $H_k(\Omega)\to H_k(\H)$ is injective for $k=1,3$.
Thus we have a Runge pair
although $\Z\simeq H_2(\Omega)\to H_2(\H)=\{0\}$ is not injective.
\end{example}

\begin{example}
Let $D=\{z\in\C:|z|>1\}$ and $\Dp=D\cup\{z\in\C: -1/2 < \Im(z)<1/2\}$.

Then $\Omega_D$ is Runge in $\Omega_{\Dp}$.

Evidently $\Omega_D$ is the complement of the closed unit ball in $\H$ and
therefore homotopic to the $3$-sphere.
Now $D_1\ne \C$, hence $\exists\ p\not\in\Omega_{D_1}$ and we have
inclusion maps
\[
\Omega_D\stackrel{i}{\hookrightarrow}
\Omega_{D_1}\stackrel{j}{\hookrightarrow}\H\setminus\{p\}.
\]
Since the composition map $j\circ i$ is a homotopy equivalence,
all the homology group homomorphisms $i_*$ induced by $i$ must be
injective. Hence our results imply that $D$ is Runge in $D_1$.
\end{example}

\section{Runge}

\subsection{The complex situation}

In the complex case one has the following well-known result.

\begin{proposition}\label{complex-runge}
Let $D\subset \Dp$ be open subsets of $\C$. Then the following properties
are equivalent:
\begin{enumerate}
\item
The inclusion map induces an injective group homomorphism
$H_1(D)\to H_1(\Dp)$.
\item
Every bounded connected component of $\C\setminus D$ intersects
$\C\setminus \Dp$.
\item
For every holomorphic function $f$ on $D$, every $\epsilon>0$ and
every compact subset $K\subset D$ there exists a
holomorphic function $F$ on $\Dp$ with $\sup_{p\in K}|f(p)-F(p)|<\epsilon$.
\end{enumerate}
If one (hence all) of these properties are fulfilled, then
$D\subset \Dp$ is called a {\em Runge pair}, or we say that
$D$ is Runge in $\Dp$.
\end{proposition}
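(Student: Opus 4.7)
Since the statement is classical, I would present it as a cycle combined with a direct proof of the topological equivalence. For $(i)\iff(ii)$, I would exploit the fact that $H_1(D)$ is freely generated by small loops $\gamma_C$ around the bounded connected components $C$ of $\C\setminus D$ (one loop per component). If some bounded component $C$ of $\C\setminus D$ is contained in $\Dp$, then $\gamma_C$, although non-trivial in $H_1(D)$, bounds inside $\Dp$, contradicting $(i)$. Conversely, assuming $(ii)$, any class $\sum_j n_j[\gamma_{C_j}]\in H_1(D)$ that vanishes in $H_1(\Dp)$ must have each $n_j=0$: choosing $p_j\in C_j\cap(\C\setminus\Dp)$ (which exists by $(ii)$), the winding-number homomorphism $H_1(\Dp)\to\Z$ based at $p_j$ sends this class to $n_j$, forcing $n_j=0$.

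For $(iii)\Rightarrow(ii)$, I would argue by contrapositive. If a bounded component $C$ of $\C\setminus D$ lies entirely in $\Dp$, pick $a\in C$ and let $f(z)=1/(z-a)$, which is holomorphic on $D$. A small circle $\gamma$ around $a$ lying in $D$ satisfies $\int_\gamma f(z)\,dz = 2\pi i$, while every holomorphic $F$ on $\Dp$ satisfies $\int_\gamma F(z)\,dz=0$, since $\gamma$ bounds a disk inside $\Dp$. Uniform approximation of $f$ by such $F$ on the compact set $\gamma$ would force the two integrals to coincide, a contradiction.

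For $(ii)\Rightarrow(iii)$, I would invoke the classical Runge pole-pushing argument: given $f$ holomorphic on $D$, a compact $K\subset D$, and $\epsilon>0$, first approximate $f$ on $K$ by a Riemann sum of the Cauchy integral of $f$ over a cycle in $D\setminus K$ encircling $K$, producing a rational function whose poles lie outside $K$. Then migrate each pole via iterated geometric-series expansions, either pushing it to infinity (if it lies in the unbounded component of $\C\setminus D$, yielding a polynomial) or, using $(ii)$, along a path inside its containing bounded component $C$ of $\C\setminus D$ to a chosen point $b\in C\cap(\C\setminus\Dp)$. The main obstacle is the bookkeeping in this last step: one must subdivide each migration path finely enough that every intermediate geometric series converges uniformly on $K$, and control the accumulated error over the finitely many poles and finitely many migration steps. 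Since this is entirely standard, I would confine the write-up to the topological equivalence and refer to a classical textbook for the analytic details.
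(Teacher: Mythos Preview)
Your proposal is correct. The paper itself treats this proposition as classical and simply cites Behnke--Stein and Remmert for the proof, so your sketch actually goes further than the paper does at this point. However, the paper's Appendix (\S3) does supply its own proof of the topological equivalence $(i)\Leftrightarrow(ii)$, and there the organizing principle is different from yours: rather than describing $H_1(D)$ via a basis of loops around bounded complementary components, the authors construct a natural isomorphism $H_1(G,\Z)\cong \mathcal C_c(G^c,\Z)$ (locally constant integer-valued functions with compact support on the complement) by sending a cycle to its winding-number function, and then they show that the restriction map $\mathcal C_c(G^c,\Z)\to\mathcal C_c(H^c,\Z)$ is injective if and only if every bounded component of $G^c$ meets $H^c$. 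Your approach and theirs are dual to one another---you work with generators of $H_1$, they work with the ``coordinate'' functionals given by winding numbers---and both rely on winding numbers at the core. Their formulation has the advantage that the separation step (finding, for a bounded component $C$, something that detects $C$ but not the rest of $G^c$) is carried out cleanly at the level of functions (their Lemma~3.2), whereas in your version the existence of a ``small loop $\gamma_C$'' winding once around $C$ and zero times around every other component is exactly the non-trivial input, essentially equivalent to what the Appendix proves. For the analytic implications $(ii)\Leftrightarrow(iii)$ your pole-pushing outline is the standard textbook argument and matches what the cited references contain.
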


See \cite{BS} and \cite{R} \paragraf13.2.1.

\subsection{Symmetric complex situation}

We recall (see \paragraf1) that a subset $D\subset\C$ is ``{\em symmetric}'' if it is
invariant under complex conjugation.

\begin{lemma}
Let $D\subset \Dp$ be symmetric open subsets of $\C$.

Then the following are equivalent:
\begin{enumerate}
\item
Every holomorphic function $f$ on $D$ can be approximated 
(locally uniformly) by
holomorphic functions on $\Dp$
(i.e., $D\subset\Dp$ is a Runge pair)
\item
Every holomorphic function $f$ on $D$ which is {\em symmetric},i.e., for which $f(z)=\overline{f(\bar z)}$ holds,
can be approximated 
(locally uniformly) by symmetric
holomorphic functions on $\Dp$.
\end{enumerate}
\end{lemma}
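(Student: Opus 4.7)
The plan is to exploit a \emph{symmetrization operator} $S$ which, given a holomorphic function $g$ on any symmetric open subset $U \subset \C$, produces
\[
(Sg)(z) := \tfrac{1}{2}\bigl(g(z) + \overline{g(\bar z)}\bigr).
\]
Since $U$ is symmetric, the map $z \mapsto \overline{g(\bar z)}$ is holomorphic on $U$, so $Sg$ is holomorphic on $U$; by construction it is symmetric, and $g$ is itself symmetric precisely when $g = Sg$.

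For the direction $(1) \Rightarrow (2)$, I would start with a symmetric holomorphic function $f$ on $D$, a compact $K \subset D$ and $\epsilon > 0$. The symmetry of $D$ guarantees that $K \cup \bar K$ is still a compact subset of $D$, so hypothesis $(1)$ yields a holomorphic function $g$ on $D_1$ with $|f - g| < \epsilon$ on $K \cup \bar K$. The symmetry of $D_1$ is then used to define $\tilde g := Sg$, which is a \emph{symmetric} holomorphic function on $D_1$. Since $f = Sf$, the triangle inequality gives, for $z \in K$,
\[
|f(z) - \tilde g(z)| = |Sf(z) - Sg(z)| \le \tfrac{1}{2}|f(z) - g(z)| + \tfrac{1}{2}|f(\bar z) - g(\bar z)| < \epsilon.
\]

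For the direction $(2) \Rightarrow (1)$, I would decompose an arbitrary holomorphic $f$ on $D$ into its symmetric pieces: set $h_1 := Sf$ and $h_2(z) := \tfrac{1}{2i}\bigl(f(z) - \overline{f(\bar z)}\bigr)$. A direct check shows that $h_1,h_2$ are both holomorphic and symmetric on $D$, and that $f = h_1 + i h_2$. Applying $(2)$ separately to $h_1$ and $h_2$ produces symmetric holomorphic approximants $H_1,H_2$ on $D_1$, and then $H_1 + i H_2$ approximates $f$ on the chosen compact set.

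There is no real obstacle here; the only points to verify are the holomorphy of $z \mapsto \overline{g(\bar z)}$ on a symmetric domain and the fact that $S$ is a contraction on the sup-norm over conjugation-invariant compact sets, both of which are immediate. The essential content of the lemma is simply that the average of a function with its Schwarz reflection commutes, up to controlled error, with Runge approximation.
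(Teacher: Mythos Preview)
Your argument is correct and matches the paper's proof essentially line for line: the paper uses the same symmetrization $g\mapsto\frac12\bigl(g(z)+\overline{g(\bar z)}\bigr)$ for $(i)\Rightarrow(ii)$ and the same decomposition $f=g+ih$ with $g=Sf$, $h=\frac{1}{2i}\bigl(f(z)-\overline{f(\bar z)}\bigr)$ for $(ii)\Rightarrow(i)$. Your version is in fact slightly more careful, since you make explicit the need to control the approximation on $K\cup\bar K$ (which the paper leaves implicit in the phrase ``also converges'').
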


\begin{proof}
$(i)\implies (ii)$. Assume that $D$ is Runge in $\Dp$ and that $f:D\to \C$
is holomorphic with $f(z)=\overline{f(\bar z)}$. If $f_n$ is a 
sequence of holomorphic functions on $\Dp$ converging to $f$,
then
also
\[
g_n(z)=\frac{1}{2}\left( f_n(z) + \overline{f_n(\bar z)}\right)
\]
converges to $f$ and in addition fulfills
$g_n(z)=\overline{g_n(\bar z)}$

$(ii)\implies(i)$.
Let $f:D\to\C$ be an arbitrary holomorphic function.
We define
\begin{align*}
g(z) &= \frac 12\left( f(z)+\overline{f(\bar z)}\right)\\
h(z) &= \frac 1{2i}\left( f(z)-\overline{f(\bar z)}\right)
\end{align*}
Then $g$ and $h$ are both symmetric holomorphic functions and
$f(z)= g(z)+ih(z)$.
By assumption the functions $g$ and $h$ may be approximated by holomorphic
functions on $\Dp$. It follows that $f=g+ih$ can be approximated, too.
\end{proof}

\subsection{Passing from $D$ to $\Omega_D$}

Let a symmetric open subset $D\subset\C$ be given.
The associated axially symmetric subset $\Omega_D$ in $\H$
has been defined in $\paragraf1$ as:

\[
\Omega_D=\{x+yI:x,y\in\R, I\in\S, x+yi\in D\}
\]

(with $\S=\{q\in\H:q^2= -1\}$).

This construction may be reformulated as follows.

Define $D^+=D\cap\{z\in\C:\Im(z)\ge 0\}$, $D_\R=D\cap\R$.

Let $Z=D^+\times\S$. Then $\Omega_D\simeq Z/\!\!\sim$
where $(p,I)\sim(q,J)$ iff $p=q$
and one of the following conditions is fulfilled:
\begin{enumerate}
\item
$I=J$, or
\item
$p=q\in\R$.
\end{enumerate}
In other words, for each $p\in D_\R$, the subset $\{p\}\times\S$ of $Z$
is collapsed to one point.

\subsection{Quaternionic situation}

\begin{lemma}
  Let $f:\H\to\H$ be a slice  function induced by a stem function
  $F$.
  Then
  \[
  \frac {1}{\sqrt 2}|| F(x+yi)||
  \le \max\{ |f(x+yI)|, |f(x-yI)| \}
  \le
  \sqrt 2 || F(x+yi)||
  \]
  for every $x,y\in\R$, $I\in\S$.
\end{lemma}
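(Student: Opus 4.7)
The plan is to translate everything into elementary inequalities in the Euclidean space $\H \cong \R^4$. Using the notation of the paper, write $a = F_1(x+yi)$ and $b = F_2(x+yi)$, so that $F(x+yi) = a\tensor 1 + b\tensor\iota$. With respect to the natural Euclidean norm on $\Hc \cong \H \oplus \H$, this gives $\|F(x+yi)\|^2 = |a|^2 + |b|^2$. Moreover, the explicit formulas linking $f$ to its stem function $F$ (recalled earlier in the introduction) yield
\[
f(x+yI) = a + Ib, \qquad f(x-yI) = a - Ib.
\]
So the inequality to prove reduces to showing
\[
\tfrac12\bigl(|a|^2+|b|^2\bigr) \;\le\; \max\{|a+Ib|^2,\,|a-Ib|^2\} \;\le\; 2\bigl(|a|^2+|b|^2\bigr).
\]

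For the upper bound I would simply invoke the triangle inequality and the Cauchy--Schwarz inequality: $|a \pm Ib| \le |a| + |Ib| = |a| + |b| \le \sqrt 2 \sqrt{|a|^2+|b|^2} = \sqrt 2\,\|F(x+yi)\|$, from which the right-hand estimate is immediate.

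For the lower bound, the key observation is that $|Ib| = |b|$ (since $I\in\S$ has modulus $1$ and the quaternionic norm is multiplicative), which lets me apply the \emph{parallelogram identity} in $\H$ regarded as Euclidean $\R^4$:
\[
|a+Ib|^2 + |a-Ib|^2 \;=\; 2\bigl(|a|^2 + |Ib|^2\bigr) \;=\; 2\bigl(|a|^2+|b|^2\bigr) \;=\; 2\,\|F(x+yi)\|^2.
\]
Since the maximum of two non-negative reals is at least their average, this gives $\max\{|a+Ib|^2,|a-Ib|^2\} \ge \|F(x+yi)\|^2$, which is in fact stronger than the claimed lower bound $\tfrac{1}{\sqrt 2}\|F(x+yi)\|$.

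There is no real obstacle: the only point that needs a bit of care is being explicit about which norm on $\Hc$ is used, but once one fixes the standard Hilbert structure coming from the decomposition $\Hc = \H\tensor 1 \oplus \H\tensor\iota$ the statement follows at once from the parallelogram identity together with the triangle inequality.
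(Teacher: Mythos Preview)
Your argument is correct. The upper bound is handled exactly as in the paper (triangle inequality followed by $|a|+|b|\le\sqrt{2}\sqrt{|a|^2+|b|^2}$). For the lower bound, however, you take a different and in fact sharper route: the paper estimates each component separately via the inverse formulas $F_1=\tfrac12(f(x+yI)+f(x-yI))$ and $F_2=-\tfrac{I}{2}(f(x+yI)-f(x-yI))$, obtaining $\|F_1\|,\|F_2\|\le M:=\max\{|f(x+yI)|,|f(x-yI)|\}$ and hence $\|F\|^2\le 2M^2$, which yields exactly the stated constant $1/\sqrt{2}$. Your use of the parallelogram identity gives the identity $|f(x+yI)|^2+|f(x-yI)|^2=2\|F\|^2$ and therefore the stronger bound $M\ge\|F\|$ (constant $1$ rather than $1/\sqrt{2}$). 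Both approaches are elementary, but yours extracts more from the Euclidean structure of $\H$ and shows that the lower constant in the lemma is not optimal.
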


\begin{proof}
  From $f(x+yI)=F_1(x+yi)+IF_2(x+yi)$ one deduces
  \begin{align*}
    &\hphantom{\implies}|f(x+yI)|\le ||F_1(x+yi)||+||F_2(x+yi)||\\
    &\implies |f(x+yI)|^2\le\left( ||F_1(x+yi)||+||F_2(x+yi)||\right)^2\\
    &\implies |f(x+yI)|^2\le ||F(x+yi)||^2 +2||F_1(x+yi)||\cdot||F_2(x+yi)||\\
    &\hphantom{\implies}\le 2 ||F(x+yi)||^2\\
    &\implies |f(x+yI)|\le \sqrt 2 ||F(x+yi||.\\
  \end{align*}
  On the other hand,
  $F_1(x+yi)=\frac 12\left( f(x+yI)+f(x-yI)\right)$
  implying
  $||F_1(x+yi)||\le  \max\{ |f(x+yI)|, |f(x-yI)| \}$.
  Similarily:
  $||F_2(x+yi)||\le  \max\{ |f(x+yI)|, |f(x-yI)| \}$.
  Combining these bounds we obtain:
  \[
  ||F(x+yi)||^2 \le 2\max \left \{||f(x+yI)||^2,||f(x-yI)||^2\right\}
  \]
  which implies the first inequality of the lemma.
\end{proof}
  
\begin{proposition}\label{q-c-runge}
Let $D\subset \Dp$ be a symmetric open subsets of $\C$ with 
corresponding axially symmetric open subsets $\Omega_D
\subset\Omega_{\Dp}$ in $\H$.

Then every regular function on $\Omega_D$ may be approximated locally
uniformly by regular functions on $\Omega_{\Dp}$
if and only if $D$ is Runge in $\Dp$.
\end{proposition}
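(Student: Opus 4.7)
The plan is to work through the bijective correspondence between regular functions on $\Omega_D$ and stem functions $F\colon D\to\H\otimes_\R\C$. Using the real basis $\{1,I,J,K\}$ of $\H$ we may decompose $F(z) = 1\otimes a(z) + I\otimes b(z) + J\otimes c(z) + K\otimes d(z)$ with $a,b,c,d\colon D\to\C$ holomorphic. A short computation shows that the stem-function symmetry $F(\bar z) = \overline{F(z)}$ is equivalent to each of the four components $a,b,c,d$ being symmetric in the sense of the symmetric Runge lemma above, so the data of a regular function on $\Omega_D$ is equivalent to the data of four symmetric holomorphic scalar functions on $D$.

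For the ``$\Leftarrow$'' direction (Runge pair implies approximation of regular functions), given a regular function $f$ on $\Omega_D$ I would extract its stem function $F$ and the components $a,b,c,d$ as above. The symmetric Runge lemma produces symmetric holomorphic approximations $\tilde a_n,\tilde b_n,\tilde c_n,\tilde d_n$ on $D_1$, which reassemble into stem functions $\tilde F_n$ on $D_1$ and associated regular functions $\tilde f_n$ on $\Omega_{D_1}$. Applied to the difference $f-\tilde f_n$ (which is induced by the stem function $F-\tilde F_n$), the preceding norm lemma yields the upper bound $|f(q)-\tilde f_n(q)|\le\sqrt 2\,\|F(\pi(q))-\tilde F_n(\pi(q))\|$, where $\pi\colon\Omega_D\to D$ is the continuous projection sending $x+yJ$ to $x+|y|i$. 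Since $\pi(K)\subset D$ is compact for every compact $K\subset\Omega_D$, this transports uniform convergence of the $\tilde F_n$ on $\pi(K)$ to uniform convergence of $\tilde f_n$ on $K$.

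For the converse, by the symmetric Runge lemma it suffices to approximate an arbitrary symmetric holomorphic $g\colon D\to\C$ by symmetric holomorphic functions on $D_1$. I would form the stem function $F(z) = 1\otimes g(z)$ (symmetric because $g$ is) and invoke the hypothesis on its associated regular function $f$ on $\Omega_D$ to obtain regular approximations $\tilde f_n$ on $\Omega_{D_1}$ with stem functions $\tilde F_n = 1\otimes\tilde a_n + I\otimes\tilde b_n + J\otimes\tilde c_n + K\otimes\tilde d_n$ whose four components are symmetric holomorphic on $D_1$. Fixing $I_0\in\S$, for each compact $K\subset D$ the set $K'=\{x+yI_0:x+yi\in K\}\cup\{x-yI_0:x+yi\in K\}$ is compact in $\Omega_D$, and the lower half of the norm equivalence transports uniform convergence $\tilde f_n\to f$ on $K'$ to uniform convergence $\tilde F_n\to F$ on $K$; reading off the first coordinate then yields $\tilde a_n\to g$ uniformly on $K$, as required.

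The real content is already packaged into the two preceding lemmas, so the main (mild) obstacle is just the compactness bookkeeping needed to convert the pointwise norm equivalence into a transfer of local-uniform convergence between the stem-function picture on $D$ and the regular-function picture on $\Omega_D$.
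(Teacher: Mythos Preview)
Your argument is correct and follows essentially the same route as the paper: both use the norm-comparison lemma to transfer locally uniform convergence between regular functions on $\Omega_D$ and their stem functions on $D$, and then invoke the symmetric Runge lemma to pass between approximation of (four-component) symmetric holomorphic data and ordinary Runge approximation on $D\subset D_1$. The paper compresses all of this into the statement that the stem correspondence is a topological vector space isomorphism (for the sup-norms on symmetric compacta $C\subset D$ and the corresponding $\Omega_C$), while you spell out the component decomposition and the compactness bookkeeping explicitly; the content is the same.
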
 

\begin{proof}
For any symmetric subset $C\subset D$ the corresponding subset
\[
\Omega_C=\{x+yI:\exists\ x+yi\in C, I\in\S\}
\]
of $\H$ is compact if and only if $C$ is compact.
We measure the size of a function by using the sup-norm.
From the euclidean scalar product on $\C\simeq\R^2$ and $\H\simeq\R^4$
we deduce a scalar product on $\H\tensor\C\simeq\R^8$. The norm
induced by this scalar product is denoted by $||\ ||$.
From the preceding lemma we
deduce that
\[
\frac 1{\sqrt 2}||F||_C \le ||f||_{\Omega_C} \le \sqrt 2 ||F||_C 
\]
for any compact symmetric subset $C\subset D$
(where $||F||_C=\sup_{z\in C }||F(z)||$.)
Therefore the space of slice functions on $\Omega_D$ is
isomorphic as a topological vector space to the space
of stem functions on $D$ (both spaces endowed with topology of
locally uniform convergence).
This implies the assertion.
\end{proof}

\subsection{Homology of axially symmetric domains}
In this paragraph we show that and how  the homology of an
axially symmetric domain in $\H$ is determined by that of the corresponding
symmetric open set in $\C$.

We will study the topology of this procedure aided by the
Mayer-Vietoris sequence.

We introduce some notation which we will keep throughout this
section.
\begin{convention}
Let $D$ be a symmetric open subset of $\C$
(i.e.~a domain such that $z\in D\iff \bar z\in D$), 
$D^+=\{z\in D:\Im(z)\ge 0\}$,
$D^-=\{z\in D:\Im(z)\le 0\}$,
$D_\R=D\cap\R$, $D^*=D^+\setminus\R$.
For any subset $A\subset\C$ a subset $\Omega_A$ of $\H$
is defined as
\[
\Omega_A=\{x+yI:  x,y\in\R, x+yi\in A, I\in\S\}
\]

Let the boundary of $D$ in $\C$
be denoted by $\partial D$.
Define a real positive function $h$ on $D_\R$ by
\[
h(x)= dist(x,\partial D) = \inf_{z\in\partial D}|z-x|.
\]
Using the triangle inequality, it is easy to check that $h$ is
continuous.
Furthermore, we define 
$W=\{x+yi\in\C: x\in D_\R: 0\le y < h(x)\}$,
$W^*=W\setminus D_\R$.
\end{convention}

We observe that
\begin{align*}
W&=\{x+rh(x)i: x\in D_\R, r\in [0,1[ \} \\
W^*&=\{x+rh(x)i: x\in D_\R, r\in ]0,1[ \} \\
D_\R&=\{x+rh(x)i: x\in D_\R, r=0 \}. \\
\end{align*}
Since $[0,1[$, $]0,1[$ and $\{0\}$ are all contractible, it is clear
that 
the natural inclusion maps $W^*\to W$ and  $D_\R\to W$ are
homotopy equivalences. 
The inclusion map  $D^*\to D^+$ is likewise a homotopy equivalence.

We recall the definition of $\tilde H_0$: An element $\alpha$
in $H_0(X)$ is a formal finite $\Z$-linear combination of points
$\alpha=\sum n_i\{p_i\}$ ($p_i\in X$)
and therefore admits a natural degree function
by $deg(\alpha)=\sum n_i$. The ``reduced homology group'' $\tilde
H_0$ is defined as the kernel of the degree map $H_0\to\Z$.

\begin{proposition}\label{homology}
Let $D$ be a symmetric open subset 
of $\C$.
We assume that the
corresponding axially symmetric set  $\Omega_D$ is 
connected.

Then $H_2(\Omega_D)=\{0\}$
if $D_\R\ne\{\}$ and $H_2(\Omega_D)\simeq\Z$ if
$D_\R$ is empty.

There are natural exact sequences
\begin{equation}\label{seqh3}
0 \to H_1(D^+) \to H_3(\Omega_D) \to \tilde H_0(D_\R) \to 0
\end{equation}
and
\begin{equation}\label{seqh1}
0 \to H_1(D^+) \to H_1(\Omega_D)\to 0.
\end{equation}
\end{proposition}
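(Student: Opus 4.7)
The strategy is to compute the homology of $\Omega_D$ via the Mayer-Vietoris sequence associated with an open cover adapted to the quotient structure described in the preceding convention. I would take $U_1=\Omega_W$ and $U_2=\Omega_{D^*}$, which are open in $\Omega_D$ and cover it since $W\cup D^*=D^+$. Their intersection is $U_1\cap U_2=\Omega_{W^*}$, which, because $W^*\cap\R=\emptyset$, is homeomorphic to $W^*\times\S$; similarly $U_2\cong D^*\times\S$. For $U_1$, I would produce a deformation retraction onto $D_\R$ via $H_t(x+rh(x)I)=x+(1-t)rh(x)I$, which descends continuously across the quotient because at $r=0$ its value $x$ is independent of $I$.

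Combining this with the homotopy equivalences $W^*\simeq D_\R$ and $D^*\simeq D^+$ recalled in the excerpt yields $U_1\simeq D_\R$, $U_2\simeq D^+\times\S$, and $U_1\cap U_2\simeq D_\R\times\S$. Since $D_\R$ is a disjoint union of open intervals, $H_0(D_\R)\cong\bigoplus_{\pi_0(D_\R)}\Z$ and $H_k(D_\R)=0$ for $k\ge 1$. The assumption that $\Omega_D$ be connected forces $D^+$ connected (via the continuous surjection $\Omega_D\to D^+$ sending $q$ to $\Re(q)+|q-\Re(q)|i$); together with $D^+$ being a non-compact surface this gives $H_0(D^+)\cong\Z$ and $H_2(D^+)=0$. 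Applying the Künneth formula with $\S\simeq S^2$ now determines all groups in the Mayer-Vietoris sequence, and the inclusion-induced map $\phi\colon H_0(D_\R)\to H_0(D^+)\cong\Z$ is the augmentation sending each component generator to $1$, with kernel $\tilde H_0(D_\R)$.

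It then suffices to read off the three claims from the relevant segments. For $H_2$ the sequence reduces to $H_2(D_\R\times\S)\to H_2(D^+\times\S)\to H_2(\Omega_D)\to 0$, i.e.\ $\bigoplus_{\pi_0(D_\R)}\Z\xrightarrow{\phi}\Z\to H_2(\Omega_D)\to 0$, whose cokernel vanishes if $D_\R\neq\emptyset$ and equals $\Z$ otherwise. For $H_3$, vanishing of $H_3(U_1)$ and $H_3(U_1\cap U_2)$ yields $0\to H_3(U_2)\to H_3(\Omega_D)\to\ker\phi\to 0$, and identifying $H_3(D^+\times\S)\cong H_1(D^+)\otimes H_2(\S)\cong H_1(D^+)$ together with $\ker\phi\cong\tilde H_0(D_\R)$ gives \eqref{seqh3}. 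For $H_1$, the map $H_0(U_1\cap U_2)\to H_0(U_1)\oplus H_0(U_2)$ sends $e_i\mapsto(e_i,-1)$ and is injective, so combined with $H_1(U_1)=H_1(U_1\cap U_2)=0$ the sequence collapses to $0\to H_1(D^+)\to H_1(\Omega_D)\to 0$, which is \eqref{seqh1}.

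The principal obstacle is the first step, namely verifying rigorously that $\Omega_W$ deformation retracts onto $D_\R$ through the quotient that collapses $\{x\}\times\S$ to a point for each $x\in D_\R$; the explicit formula above succeeds precisely because the retraction is radial in the $(r,I)$ direction and thus respects the fiberwise collapse. A secondary check is the identification of the Mayer-Vietoris connecting maps in terms of $\phi$, which reduces to naturality of the Künneth isomorphism under the inclusions $D_\R\times\S\hookrightarrow W^*\times\S\hookrightarrow D^*\times\S\simeq D^+\times\S$.
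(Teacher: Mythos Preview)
Your proposal is correct and follows essentially the same route as the paper: the same Mayer--Vietoris cover $\Omega_D=\Omega_{D^*}\cup\Omega_W$ with intersection $\Omega_{W^*}$, the same homotopy equivalences (including the radial retraction of $\Omega_W$ onto $D_\R$, which the paper writes as $\Phi_s(x+yI)=x+syI$), and the same K\"unneth computation to read off the three claims. Your explicit verification that $\Omega_D$ connected forces $D^+$ connected is a detail the paper leaves implicit but uses.
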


\begin{proof}
Observe that $\Omega_D=\Omega_{D^*}\cup\Omega_W$ and
$\Omega_{D^*}\cap\Omega_W=\Omega_{W^*}$.
This yields a Mayer-Vietoris sequence for homology:
\[
\ldots \to H_{k+1}(\Omega_D)
\to H_k(\Omega_{W^*}) \to H_k(\Omega_{D^*})\oplus H_k(\Omega_{W})
\to H_k(\Omega_D)\to\ldots
\]
We claim that there are homotopy equivalences
\[
\Omega_{W^*}\sim \S\times D_\R,\quad
\Omega_W \sim D_\R,\quad
\Omega_{D^*}\sim \S\times D^*\sim \S\times D^+.
\]
The first of these homotopy equivalences holds
because
\[
\Omega_{W^*}=\{x+yI: x\in D_\R, 0<y<h(x), I\in\S\}.
\]
We observe that
$D_\R$ is a deformation retract of $\Omega_W$.
Indeed 
\[
\Omega_{W}=\{x+yI: x\in D_\R, 0\le y<h(x), I\in\S\}
\]
may be retracted to $D_\R$ via 
\[
\Phi_s:(x+yI)\mapsto(x+syI)\ \ (0\le s \le 1).
\]
Thus $\Omega_W$ is homotopy equivalent to $D_\R$.

Finally
$\Omega_{D^*}\sim \S\times D^+$
follows from
\[
\Omega_{D^*}=\{ x+yI, x+yi\in D^*, I\in \S\}\simeq D^*\times\S
\]
and the fact that $D^+$ and $D^*$ are homotopy equivalent.

Thus our Mayer-Vietoris sequence yields this exact sequence:
{\small
\[
\ldots \to H_{k+1}(\Omega_D)
\to H_k(\S\times D_\R) \to H_k(\S\times {D^+})\oplus H_k(D_\R)
\to H_k(\Omega_D)\to\ldots
\]}

Since the homology groups of the sphere $\S$ are torsion-free,
the K\"unneth formula tells us that 
\begin{align*}
H_*(\S\times X)
 & \simeq H_*(\S)\tensor_{\Z} H_*(X) \\
& \simeq \left( H_0(\S)\tensor_{\Z} H_*(X) \right)
\oplus \left( H_2(\S)\tensor_{\Z} H_*(X) \right)\\
& \simeq
H_*(X)\oplus [\S]\cdot H_*(X) \\
\end{align*}
where $[\S]\in H_2(\S)$ is the fundamental class.

Hence

\begin{align*}
\ldots \to & H_{k+1}(\Omega_D)
\to \left( H_0(\S) \tensor H_k( D_\R) \right)
\oplus \left(  H_2(\S)\tensor H_{k-2}(D_\R) \right)\\
\to & 
\left( H_0(\S) \tensor H_k( D^+) \right)
\oplus \left( H_2(\S) \tensor H_{k-2}(D^+)\right)
\oplus H_k(D_{\R})\\
\to & H_k(\Omega_D)\to\ldots\\
\end{align*}

We know that $H_k(D_\R)=\{0\}$ for $k>0$
and $H_k(D^+)=\{0\}$ for $k>1$ for dimension reasons.

Therefore our long exact Mayer-Vietoris sequences yield the
following two exact sequences:
\begin{align}\label{MV-1}
0 &\to H_2(\S)\tensor H_1(D^+)\to 
H_3(\Omega_D)\to\\
\nonumber
&H_2(\S)\tensor H_0(D_\R) \to
H_2(\S)\tensor H_0(D^+) \to
H_2(\Omega_D)\to 0 
\end{align}
and
\begin{align}\label{MV-2}
0& \to H_0(\S)\tensor H_1(D^+) \to
H_1(\Omega_D)\to  \\
\nonumber
&H_0(D_\R)\to H_0(D^+) \oplus H_0(D_\R)
\to H_0(\Omega_D)\to 0
\end{align}

Case $(1)$.
Assume now that $D_\R$ is not empty.
Then inclusion map from $D_\R$ into $D^+$ yields a 
surjective group homomorphism $H_0(D_\R) \to H_0(D^+)$
with $\tilde H_0(D_\R)$ as kernel.
Let $\alpha$ denote the homomorphism 
$H_2(\S)\tensor H_0(D_\R) \to
H_2(\S)\tensor H_0(D^+)$ in \eqref{MV-1}.
Then the exact sequence \eqref{MV-1} can be split into two parts

\begin{equation}\label{MV-3}
0 \to H_2(\S)\tensor H_1(D^+)\to 
H_3(\Omega_D)\to \ker\alpha \to 0
\end{equation}
and
\begin{equation}\label{MV-4}
0 \to \left( H_2(\S)\tensor H_0(D_\R)\right )/\ker\alpha 
\stackrel\alpha\to 
H_2(\S)\tensor H_0(D^+) \to
H_2(\Omega_D)\to 0. 
\end{equation}

Since $\ker\alpha\simeq\tilde H_0(D_\R)$, \eqref{MV-3}
now implies \eqref{seqh3}.

Furthermore \eqref{MV-4} implies that $H_2(\Omega_D)$ is zero, because
$\alpha$ is surjective.

Case $(2)$.
Now let us discuss the case where $D_\R$ is empty. Then
$H_0(D_\R)=\{0\}$ and consequently 
from \eqref{MV-1} we  obtain two sequences

\[
0 \to H_2(\S)\tensor H_1(D_\R)\to 
H_3(\Omega_D)\to 0=H_2(\S)\tensor H_0(D_\R)
\]
and
\[
0=H_2(\S)\tensor H_0(D_\R)\to
\Z\simeq H_2(\S)\tensor H_0(D^+) \to
H_2(\Omega_D)\to 0 
\]
Using $H_2(\S)\simeq\Z\simeq H_0(\S)$ we get 
\eqref{seqh3} and $H_2(\Omega_D)=\{\Z\}$.

It remains to show \eqref{seqh1}.
For this purpose
we  return to \eqref{MV-2}.

The map $H_0(D_\R)\to H_0(D^+) \oplus H_0(D_\R)$ 
in \eqref{MV-2}
is obviously injective,
therefore (due to exactness of the sequence) the preceding map is zero
and $H_1(\Omega_D)$ is isomorphic to $H_0(\S)\tensor H_1(D^+)$.
However, $H_0(\S)\simeq\Z$ and therefore
$H_0(\S)\tensor H_1(D^*)\simeq H_1(D^*)$.
Hence $H_1(\Omega_D)\simeq H_1(D^*)$.
\end{proof}

\begin{corollary}
  Assume in addition that $D$ is a bounded domain with smooth
  boundary. Then all the homology groups are finitely generated
  and Proposition~\ref{homology} implies the following
  description of the Betti numbers $b_k=\dim_\R H_k(\ ,\Z)\tensor_{\Z}\R$:
  Let $r=b_0(D_\R)-1$ if $D_\R$ is not empty and set $r=0$ if $D_\R$
  is empty.
  Then
  \begin{align*}
    b_1(\Omega_D) &=\frac 12\left( b_1(D)-r \right)\\
    b_3(\Omega_D) &=\frac 12\left( b_1(D)+r \right)\\
    \end{align*}
  and
\[      b_2(\Omega_D) =
    \begin{cases} 1 & \text{ if $D_\R$ is empty}\\
       0 & \text{ if $D_\R$ is not empty}\\
    \end{cases}
\]
\end{corollary}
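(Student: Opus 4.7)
The plan is to (i) read the Betti numbers of $\Omega_D$ directly off the exact sequences of Proposition~\ref{homology}, and then (ii) relate $b_1(D^+)$ to $b_1(D)$ via a Mayer--Vietoris decomposition $D=D^+\cup D^-$. Since $D$ is bounded with smooth boundary, all the groups are finitely generated, so tensoring with $\R$ and counting dimensions is legitimate. From \eqref{seqh1} we immediately get $b_1(\Omega_D)=b_1(D^+)$, and from \eqref{seqh3} we get $b_3(\Omega_D)=b_1(D^+)+\dim_\R\tilde H_0(D_\R;\R)=b_1(D^+)+r$, since $\dim_\R\tilde H_0(D_\R;\R)$ equals $b_0(D_\R)-1$ when $D_\R\ne\emptyset$ and vanishes otherwise. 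The value of $b_2(\Omega_D)$ is asserted in Proposition~\ref{homology}. So the statement reduces to proving $2\,b_1(D^+)=b_1(D)-r$.

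For this identity I would set up Mayer--Vietoris for $D=U\cup V$ with $U=D\cap\{\Im z>-\epsilon\}$ and $V=D\cap\{\Im z<\epsilon\}$ for small $\epsilon>0$. Sliding the imaginary part monotonically toward zero gives deformation retractions $U\simeq D^+$, $V\simeq D^-$ and $U\cap V\simeq D_\R$. Since $D_\R$ is an open subset of $\R$, it is a disjoint union of open intervals, and in particular $H_1(D_\R)=0$. The Mayer--Vietoris sequence therefore yields
\[
0\to H_1(D^+)\oplus H_1(D^-)\to H_1(D)\to H_0(D_\R)\to H_0(D^+)\oplus H_0(D^-)\to H_0(D)\to 0.
\]
Complex conjugation is a homeomorphism $D^+\cong D^-$, so $b_i(D^-)=b_i(D^+)$. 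The quotient description $\Omega_D\simeq(D^+\times\S)/\!\sim$ from \paragraf\S2.3 collapses only $\S$-fibers over $D_\R$ to points; combined with the connectedness of $\S$, this means that $\Omega_D$ connected forces $D^+$ (and hence $D^-$) to be connected, i.e.\ $b_0(D^+)=b_0(D^-)=1$.

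Taking the alternating sum of ranks in the above five-term exact sequence yields
\[
2\,b_1(D^+)-b_1(D)+b_0(D_\R)-2+b_0(D)=0.
\]
If $D_\R\ne\emptyset$, then $D^+$ and $D^-$ meet along $D_\R$ so $D$ is connected, giving $b_0(D)=1$ and $b_0(D_\R)=r+1$, whence $2\,b_1(D^+)=b_1(D)-r$. If $D_\R=\emptyset$, then $D=D^+\sqcup D^-$ gives $b_0(D)=2$, $b_0(D_\R)=0$, and $r=0$, so again $2\,b_1(D^+)=b_1(D)=b_1(D)-r$. Combining with (i) yields the advertised formulas for $b_1(\Omega_D)$ and $b_3(\Omega_D)$.

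The only mildly delicate point is the justification of Mayer--Vietoris itself: the closed half-disks $D^\pm$ are not an open cover, so the $\epsilon$-thickening and the verification of the three deformation retractions must be carried out (for instance using a smoothing of $x+yi\mapsto x+\max(y,0)\,i$). Beyond that, the proof is a mechanical dimension count with a small case split for $D_\R$ empty versus nonempty.
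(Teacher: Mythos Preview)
Your argument is correct, and it is essentially the route the paper has in mind: the paper states the corollary without proof as a consequence of Proposition~\ref{homology}, and the missing ingredient---the relation $2\,b_1(D^+)=b_1(D)-r$---is exactly the content of the later Lemma with exact sequence~\eqref{seqhD}, which the paper proves by the same Mayer--Vietoris decomposition $D=U^+\cup U^-$ (using a thickening of $D_\R$ via the distance function $h$ rather than your $\epsilon$-strip, but these are homotopy equivalent). Your five-term sequence and alternating-sum count recover that lemma in the connected case and combine it with \eqref{seqh1}, \eqref{seqh3} just as the paper intends; there is nothing substantively different here.
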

\begin{corollary}\label{cor-homology}
Let $D$ be a symmetric open subset and let $\Omega_D$ denote the
corresponding axially symmetric set (not necessarily connected).

Then $H_2(\Omega_D)\simeq\Z^k$  where $k$ denote the number of
connected components of $D^+$ which do not intersect $\R$.

Let $\hat H_0(D_\R)$ denote the kernel of the homomorphism
$i_*: H_0(D_\R)\to H_0(D^+)$.

There are natural exact sequences
\begin{equation}
0 \to H_1(D^+) \to H_3(\Omega_D) \to \hat H_0(D_\R) \to 0
\end{equation}
and
\begin{equation}
0 \to H_1(D^+) \to H_1(\Omega_D)\to 0.
\end{equation}
\end{corollary}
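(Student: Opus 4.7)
The plan is to reduce the statement to the already-established connected case, Proposition~\ref{homology}, by decomposing $\Omega_D$ into its connected components and applying that proposition to each. First I would classify the connected components of $D$: each is either (a) a symmetric component $C=\bar C$, which must intersect $\R$ (otherwise $C$ would split into its nonempty open upper and lower halves, contradicting connectedness), or (b) a non-symmetric component $C$, paired with its conjugate $\bar C$. In case (b), since replacing $I\in\S$ by $-I$ relates the upper- and lower-half-plane points, one has $\Omega_C=\Omega_{\bar C}=\Omega_{C\sqcup\bar C}$ as subsets of $\H$, and this common set is an $\S$-bundle over the connected upper-half representative of the pair, hence itself a single connected component of $\Omega_D$; in case (a), $\Omega_C$ is a connected component of $\Omega_D$ directly from the connectedness half of Proposition~\ref{homology}.

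Next I would apply Proposition~\ref{homology} componentwise. For a symmetric component $C$, case~(1) of the proposition (since $C_\R\neq\emptyset$) yields $H_2(\Omega_C)=0$, the short exact sequence $0\to H_1(C^+)\to H_3(\Omega_C)\to\tilde H_0(C_\R)\to 0$, and $H_1(\Omega_C)\simeq H_1(C^+)$. For a non-symmetric pair, setting $D_j=C\sqcup\bar C$ so that $D_j^+=C$ and $(D_j)_\R=\emptyset$, case~(2) of the proposition gives $H_2(\Omega_C)\simeq\Z$, $H_3(\Omega_C)\simeq H_1(C)$, and $H_1(\Omega_C)\simeq H_1(C)$.

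Finally I would sum over connected components. The connected components of $D^+$ are exactly the $C^+$ for symmetric components $C$ of $D$ together with one representative $C$ from each non-symmetric pair, the former meeting $\R$ and the latter not; hence the number $k$ of components of $D^+$ disjoint from $\R$ equals the number of non-symmetric pairs, and summing the $H_2$-contributions gives $H_2(\Omega_D)\simeq\Z^k$. Since $D_\R=\bigsqcup_{\text{sym }C} C_\R$ and each $C^+$ is connected, the kernel $\hat H_0(D_\R)$ of $H_0(D_\R)\to H_0(D^+)$ decomposes as $\bigoplus_{\text{sym }C}\tilde H_0(C_\R)$ (non-symmetric pairs contribute nothing). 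Combined with the identification $H_1(D^+)=\bigoplus_{\text{sym }C}H_1(C^+)\oplus\bigoplus_{\text{non-sym pair}}H_1(C)$, the componentwise short exact sequences assemble into the two global short exact sequences asserted in the statement.

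I do not expect a real obstacle here; the only point that requires care, rather than difficulty, is the bookkeeping that matches the symmetric/non-symmetric dichotomy for components of $D$ with the meeting/not-meeting-$\R$ dichotomy for components of $D^+$, and that correctly identifies $\hat H_0(D_\R)$ as the direct sum of the reduced $\tilde H_0(C_\R)$ over symmetric components.
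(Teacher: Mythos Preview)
Your proposal is correct and follows exactly the paper's approach: the paper's proof is the one-line observation that homology of a disjoint union is the direct sum of the homologies of the pieces, so one reduces to Proposition~\ref{homology} componentwise, and you have simply spelled out that reduction in detail. One small wording issue: Proposition~\ref{homology} \emph{assumes} $\Omega_D$ is connected rather than proving it, so in case~(a) you should justify connectedness of $\Omega_C$ directly (e.g.\ $\Omega_C$ is the continuous image of the connected set $C^+\times\S$), not by appeal to that proposition.
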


\begin{proof}
This is an easy consequence of Proposition~\ref{homology}, since the homology of
a disconnected space is isomorphic to the
direct sum of the homology of its connected components.
\end{proof}

\begin{corollary}
For an axially symmetric open subset $\Omega\subset\H$ 
all homology groups are torsion-free.
\end{corollary}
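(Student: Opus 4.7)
The plan is to read off each homology group of $\Omega=\Omega_D$ from the data provided by Corollary~\ref{cor-homology}, and check torsion-freeness group by group. Since homology of a space is the direct sum of the homologies of its connected components, and a direct sum is torsion-free iff each summand is, I may as well assume $\Omega$ (and hence $D$) is connected. Also, $H_0(\Omega)$ is always a free abelian group, and since $\Omega$ is a (non-compact) open subset of $\R^4\cong\H$, we have $H_k(\Omega)=0$ for $k\ge 4$. So only $k\in\{1,2,3\}$ needs inspection.

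For $k=2$, Corollary~\ref{cor-homology} gives $H_2(\Omega)\simeq\Z^k$, which is obviously torsion-free. For $k=1$, the same corollary gives $H_1(\Omega)\simeq H_1(D^+)$. The key observation here is that $D^+$ is homotopy equivalent to the open subset $D^*=D^+\setminus\R$ of $\C$ (as noted in the proof of Proposition~\ref{homology}), and the first homology of any open subset of $\R^2$ is free abelian: such an open subset deformation retracts onto a graph, whose homology is concentrated in degrees $0$ and $1$ and is free abelian. Hence $H_1(\Omega)$ is torsion-free.

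The main case is $k=3$. Corollary~\ref{cor-homology} provides the short exact sequence
\[
0 \to H_1(D^+) \to H_3(\Omega) \to \hat H_0(D_\R) \to 0.
\]
I have just argued that $H_1(D^+)$ is free abelian. The group $\hat H_0(D_\R)$ is by definition a subgroup of $H_0(D_\R)$, which is free abelian on the connected components of $D_\R$; a subgroup of a free abelian group is free abelian, so $\hat H_0(D_\R)$ is free. A short exact sequence of abelian groups whose right-hand term is free splits, and therefore
\[
H_3(\Omega)\simeq H_1(D^+)\oplus \hat H_0(D_\R),
\]
which is a direct sum of two free abelian groups, hence free abelian and in particular torsion-free.

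There is no real obstacle; the only mildly non-obvious step is the claim that open subsets of $\R^2$ have torsion-free (indeed free) first homology, for which I would simply invoke the classical fact that they have the homotopy type of a $1$-dimensional CW complex (a graph).
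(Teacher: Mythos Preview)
Your proof is correct and follows essentially the same route as the paper: reduce to the connected case, read off $H_1$, $H_2$, $H_3$ from Proposition~\ref{homology}/Corollary~\ref{cor-homology}, and invoke the torsion-freeness of the homology of open subsets of $\R$ and $\R^2$. You supply a bit more detail than the paper (explicitly treating $k=0$ and $k\ge 4$, and observing that the $H_3$-sequence splits because $\hat H_0(D_\R)$ is free), but the underlying argument is the same.
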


\begin{proof}
First observe that there is no loss in generality in assuming
that $\Omega_D$ is connected, because the homology groups of $\Omega_D$
are isomorphic to the direct sum of the homology groups of its
connected components.

For connected $\Omega_D$ the assertion
 follows from the preceding proposition, because the homology groups
of open sets in $\R$ and $\R^2$ are known to be always torsion-free
and $D_\R$, resp.~$D^*$, is an open subset in $\R$ resp.~$\R^2$.
\end{proof}

We now explain the geometric meaning of the short exact sequence
\eqref{seqh3}.
Given an element $\alpha\in H_1(D^+)$ we may represent $\alpha$
as a finite formal $\Z$-linear combination of closed curves $\gamma_j:
S^1 \to D^+$.
Each such curve $\gamma_j$ defines a map $\eta$
from $S^1\times\S$
to $\Omega_D$ via
\[
\eta(t,I)=\Re(\gamma_j(t))+I\Im(\gamma_j(t)).
\]
The fundamental class of the real three-dimensional manifold
$S^1\times\S$ then defines the corresponding element in
$H_3(\Omega_D)$.

An element $\beta\in H_0(D_\R)$ may be represented as a formal
$\Z$-linear combination of points $\sum n_i\{p_i\}$.
Assume that $\beta$ is in the kernel of the natural map
to $\Z$ which is given by $\sum n_i\{p_i\}\mapsto \sum n_i$.
Then $\beta$ is the sum of elements of the form $+1\{p_i\}-1\{q_i\}$.
Given such an element, we choose a curve $\gamma:[0,1]\to D^+$
with $\gamma(0)=p_i$, $\gamma(1)=q_i$, $\gamma(t)\in D^+\setminus
\R$ for $0<t<1$.
Then $\Omega_{\gamma([0,1])}$ is a $3$-sphere defining an element
in $H_3(\Omega_D)$. Note that this construction depends on the
choice of the curve $\gamma$. Therefore the sequence~\eqref{seqh3} has no
natural splitting.

\begin{lemma}
Let $D\subset\C$ be a symmetric open subset.

With $D^+$, $D_\R$ and $\hat H_0(D_\R)$ defined as in
Corollary~\ref{cor-homology}
there is natural exact sequence
\begin{equation}\label{seqhD}
0 \to H_1(D^+)\oplus H_1(D^-) 
\to H_1(D) \to \hat H_0(D_\R) \to 0
\end{equation}
\end{lemma}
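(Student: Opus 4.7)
The plan is to apply the Mayer--Vietoris sequence to a presentation of $D$ as the union of (open neighbourhoods of) $D^+$ and $D^-$. Concretely, for sufficiently small $\epsilon>0$ set
\[
U^+=\{z\in D:\Im(z)>-\epsilon\},\qquad U^-=\{z\in D:\Im(z)<\epsilon\}.
\]
Then $U^+,U^-$ are open, $D=U^+\cup U^-$, and each of $U^+$, $U^-$ deformation retracts vertically onto $D^+$, $D^-$ respectively, while $U^+\cap U^-=\{z\in D:|\Im(z)|<\epsilon\}$ deformation retracts onto $D_\R$. So the Mayer--Vietoris sequence in these variables can be rewritten with $D^+,D^-,D_\R$.

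The relevant piece of the long exact sequence is
\[
H_1(D_\R)\to H_1(D^+)\oplus H_1(D^-)\to H_1(D)\to H_0(D_\R)\stackrel{\phi}{\to} H_0(D^+)\oplus H_0(D^-).
\]
Since $D_\R$ is an open subset of $\R$ it has the homotopy type of a disjoint union of points, so $H_1(D_\R)=0$; this gives the injectivity on the left and produces a short exact sequence
\[
0\to H_1(D^+)\oplus H_1(D^-)\to H_1(D)\to \ker\phi\to 0.
\]

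The remaining step is to identify $\ker\phi$ with $\hat H_0(D_\R)$. By the Mayer--Vietoris convention, $\phi(x)=(i^+_*(x),-i^-_*(x))$ where $i^\pm:D_\R\hookrightarrow D^\pm$ are the inclusions, so $\ker\phi=\ker(i^+_*)\cap\ker(i^-_*)$. Because $D$ is symmetric, complex conjugation induces a homeomorphism $D^+\to D^-$ that is the identity on $D_\R$, so $i^-_*$ is conjugate to $i^+_*$ via an isomorphism $H_0(D^+)\cong H_0(D^-)$ that is the identity on the image of $H_0(D_\R)$. Hence $\ker(i^+_*)=\ker(i^-_*)$, and this common kernel is exactly the group $\hat H_0(D_\R)$ defined in Corollary~\ref{cor-homology}. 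Substituting into the short exact sequence above yields \eqref{seqhD}.

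The only mildly delicate point is the first step: verifying that the thickenings $U^+,U^-$ really do deformation retract onto $D^+,D^-$ and that $U^+\cap U^-$ retracts onto $D_\R$, so that Mayer--Vietoris can be transcribed with the closed sets $D^\pm$ and $D_\R$ in place of their neighbourhoods. Everything else is formal manipulation of the exact sequence together with the use of the complex-conjugation symmetry to replace a two-sided condition by the one-sided condition defining $\hat H_0(D_\R)$.
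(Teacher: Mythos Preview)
Your overall strategy---cover $D$ by open thickenings of $D^+$ and $D^-$, apply Mayer--Vietoris, and use the conjugation symmetry to reduce $\ker\phi$ to $\hat H_0(D_\R)$---is exactly the one the paper uses. The identification of $\ker\phi$ with $\hat H_0(D_\R)$ is fine.

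However, the ``mildly delicate point'' you flag is in fact a genuine gap: a \emph{constant} $\epsilon$ does not work. Take $D=\C\setminus\{0\}$. Then $D^+=\{z:\Im(z)\ge 0,\ z\ne 0\}$ is star-shaped with respect to $i$ and hence contractible, whereas your $U^+=\{z:\Im(z)>-\epsilon,\ z\ne 0\}$ is an open half-plane with an interior point removed and is homotopy equivalent to $S^1$. So $U^+$ does \emph{not} deformation retract onto $D^+$. Likewise $U^+\cap U^-$ is a punctured strip, which is connected, while $D_\R=\R\setminus\{0\}$ has two components; they are not homotopy equivalent either. The problem is that for any fixed $\epsilon>0$ the strip $\{|\Im z|<\epsilon\}\cap D$ may contain points whose real part lies outside $D_\R$, so the obvious vertical retraction leaves $D$.

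The fix---and this is precisely what the paper does---is to let the width of the collar vary with the distance to $\partial D$: set $h(x)=\operatorname{dist}(x,\partial D)$ for $x\in D_\R$, $W=\{x+yi: x\in D_\R,\ 0\le y<h(x)\}$, $V=W\cup\overline W$, and then $U^\pm=D^\pm\cup V$. With this choice the vertical retraction stays inside $D$, and the homotopy equivalences $U^\pm\sim D^\pm$, $U^+\cap U^-=V\sim D_\R$ hold honestly. After that substitution your argument goes through verbatim and coincides with the paper's.
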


\begin{proof}
  Let $W$ be as above in the proof of Proposition~\ref{homology}
  and define
  \begin{align*}
    & V=\{z\in\C:z\in W\text{ or }\bar z\in W\}\\
    & U^+=D^+\cup V,\quad
U^-=D^-\cup V.
  \end{align*}
Observe that we have homotopy equivalences
\[
U^+\sim D^+,\quad U^-\sim D^-,\quad (U^+\cap U^-)=V\sim D_\R
\]

We use the Mayer-Vietoris sequence associated to
$D=U^+\cup U^-$:
\[
\ldots \to H_{k+1}(D)
\to H_k(D_\R) \to H_k(D^+)\oplus H_k(D^-) \to H_k(D)\to \ldots
\]
The details (which we omit) are very much similar to the proof
of Proposition~\ref{homology}.
\end{proof}

\begin{corollary}\label{cor-inj-1}
Let $D\subset \Dp$ be symmetric open subsets in $\C$.
Assume that 
$H_1(D)\to H_1(\Dp)$ is injective. Then
$H_1(D^+)\to H_1(\Dp^+)$ is injective, too.
\end{corollary}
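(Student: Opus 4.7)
The plan is to derive the injectivity of $H_1(D^+)\to H_1(\Dpr^+)$ from the injectivity of $H_1(D)\to H_1(\Dp)$ by a direct diagram chase, exploiting the functoriality of the exact sequence \eqref{seqhD} established in the preceding lemma.

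First, I would write down \eqref{seqhD} for both $D$ and $\Dp$. Since the inclusion $D\hookrightarrow\Dp$ is equivariant with respect to complex conjugation, it sends $D^+$ into $\Dpr^+$, $D^-$ into $\Dpr^-$, and $D_\R$ into $\Dpr\cap\R$. The Mayer--Vietoris construction used in the previous lemma is natural with respect to such inclusions, so we obtain a commutative diagram of short exact sequences
\[
\begin{CD}
0 @>>> H_1(D^+)\oplus H_1(D^-) @>>> H_1(D) @>>> \hat H_0(D_\R) @>>> 0 \\
@. @VV{\phi}V @VV{i_*}V @VVV @. \\
0 @>>> H_1(\Dpr^+)\oplus H_1(\Dpr^-) @>>> H_1(\Dp) @>>> \hat H_0(\Dpr\cap\R) @>>> 0
\end{CD}
\]
where the vertical map $\phi$ is the direct sum of the two maps induced by inclusion, namely $i_*^+\oplus i_*^-$.

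Next, I would perform the chase. Suppose $\alpha\in H_1(D^+)$ lies in the kernel of $i_*^+$. Consider $(\alpha,0)\in H_1(D^+)\oplus H_1(D^-)$; then $\phi(\alpha,0)=(i_*^+(\alpha),0)=0$. Let $\beta\in H_1(D)$ denote the image of $(\alpha,0)$ under the left-hand horizontal map of the top row. By commutativity, $i_*(\beta)$ equals the image of $\phi(\alpha,0)=0$ under the corresponding map of the bottom row, so $i_*(\beta)=0$. The hypothesis that $H_1(D)\to H_1(\Dp)$ is injective forces $\beta=0$. But the horizontal map $H_1(D^+)\oplus H_1(D^-)\to H_1(D)$ is injective (it is the left end of a short exact sequence), so $(\alpha,0)=0$, hence $\alpha=0$.

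There is no substantial obstacle; the only point requiring a brief check is the naturality of \eqref{seqhD} under inclusion of symmetric open sets, which is immediate from the naturality of the Mayer--Vietoris sequence applied to the compatible open covers $D=U^+\cup U^-$ and $\Dp=(U^+)'\cup (U^-)'$ constructed in the proof of the lemma.
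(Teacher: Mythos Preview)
Your proof is correct and takes essentially the same approach as the paper: both write down the commutative diagram of short exact sequences obtained by applying \eqref{seqhD} to $D$ and to $\Dp$, and where the paper simply cites the snake lemma you carry out the equivalent elementary chase (only the left square and the injectivity of the top horizontal and middle vertical maps are actually used). One small notational slip: in the paper $\Dpr$ already stands for $D_1\cap\R$, so your ``$\Dpr^+$'' and ``$\Dpr\cap\R$'' should read $\Dp^{\,+}$ and $\Dpr$, respectively.
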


\begin{proof}
The inclusion map from $D$ to $\Dp$ combined with
\eqref{seqhD} yields the following commutative diagram
\[
\minCDarrowwidth10pt
\begin{CD}
0 @>>>  H_1(D^+)\oplus H_1(D^-) 
@>>>  H_1(D) @>>>  \hat H_0(D_\R) @>>>  0\\
@VVV @VVV @VVV @VVV @VVV \\
0 @>>>  H_1(\Dp^+)\oplus H_1(\Dp^-) 
@>>>  H_1(\Dp) @>>>  \hat H_0(\Dpr) @>>>  0\\
\end{CD}
\]
Now the assertion follows from the snake lemma
(see e.g.~\cite{L},III.\paragraf 9).
\end{proof}

\begin{proposition}
Let $D$ be a symmetric open subset of $\C$.

Then there is a natural exact sequence
\begin{equation}\label{seqOD}
\begin{CD}
0 @>>> H_1(D^+) @>\alpha>> H_1(D) @>\beta>> H_3(\Omega_D)@>>> 0 .
\end{CD}
\end{equation}
Here $\alpha$, $\beta$ are as follows:
Let $\tau:\C\to\C$ denote complex conjugation on $\C$ 
and let $\zeta:D\times\S\to\Omega_D$
be the map given by
\[
\zeta(x+yi,J)=x+yJ.
\]
Then $\alpha(\gamma)=\gamma-\tau_*\gamma$ and
$\beta(\gamma)=\zeta_*(\gamma\times[\S])$
where $[\S]\in H_2(\S)$ denotes the fundamental class.
\end{proposition}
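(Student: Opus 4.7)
My plan is to derive the exact sequence \eqref{seqOD} by comparing the two previously established short exact sequences \eqref{seqhD} for $H_1(D)$ and the first sequence of Corollary~\ref{cor-homology} for $H_3(\Omega_D)$. Both of them end with $\hat H_0(D_\R)$ and both begin with $H_1(D^+)$, so I want to assemble them into a commutative ladder
\[
\begin{CD}
0 @>>> H_1(D^+)\oplus H_1(D^-) @>f>> H_1(D) @>>> \hat H_0(D_\R) @>>> 0\\
@. @VV\phi V @VV\beta V @| @.\\
0 @>>> H_1(D^+) @>\iota>> H_3(\Omega_D) @>>> \hat H_0(D_\R) @>>> 0
\end{CD}
\]
and then apply the snake lemma. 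The central vertical map is $\beta$ as defined; once the right square is checked to commute, $\phi$ is forced.

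Commutativity of the right square reduces to the observation that both boundary homomorphisms to $\hat H_0(D_\R)$ arise from the same real-axis decomposition: in the top row from the Mayer--Vietoris used in the lemma preceding Corollary~\ref{cor-inj-1}, and in the bottom row from $\Omega_D=\Omega_{D^*}\cup\Omega_W$ used in the proof of Proposition~\ref{homology}. In both cases a cycle is sent to the $0$-chain of its real crossing points, so $\beta$ is compatible with the identity on $\hat H_0(D_\R)$.

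Computing $\phi$ is the main piece of work. On $H_1(D^+)\hookrightarrow H_1(D)$, the map $\beta$ coincides with $\iota$ by inspection of the Mayer--Vietoris construction in the proof of Proposition~\ref{homology}. On $H_1(D^-)\hookrightarrow H_1(D)$, I write a cycle $\delta$ as $\tau_*\tilde\delta$ with $\tilde\delta\in H_1(D^+)$ and note that the two singular $3$-chains
\[
(t,J)\mapsto\Re\delta(t)+J\,\Im\delta(t)\qquad\text{and}\qquad (t,J)\mapsto\Re\tilde\delta(t)+J\,\Im\tilde\delta(t)
\]
from $S^1\times\S$ into $\Omega_D$ differ by precomposition with the antipodal map on $\S\cong S^2$, which has degree $-1$. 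Hence $\beta(\delta)=-\iota(\tilde\delta)$. This pins down $\phi$ and shows in particular that $\phi$ is surjective with kernel a diagonal-type copy of $H_1(D^+)$ of the form $\{(\gamma,\pm\tau_*\gamma):\gamma\in H_1(D^+)\}$.

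The snake lemma now yields $\mathrm{coker}\,\beta=0$ (so $\beta$ is surjective) together with a natural isomorphism $\ker\phi\simeq\ker\beta$; pushing $\ker\phi$ into $H_1(D)$ via $f$ realizes a subgroup of the form $\{\gamma\pm\tau_*\gamma:\gamma\in H_1(D^+)\}$, which is precisely the image of $\alpha$. The only substantive obstacle is sign bookkeeping: the convention chosen for the Mayer--Vietoris map $f$ and the orientation-reversing character of the antipodal map on $\S$ both enter the calculation, and it is exactly this joint sign computation that pins down the precise formula for $\alpha$.
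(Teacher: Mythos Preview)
Your strategy coincides with the paper's: both build the ladder linking the short exact sequences for $H_1(D)$ and $H_3(\Omega_D)$ via the middle vertical map $\beta$, identify the induced left vertical map (the paper calls it $\rho_1$, you call it $\phi$), and then read off the exact sequence from $\ker\rho_1$; the paper does the final step by an explicit diagram chase, while your appeal to the snake lemma packages the same chase. The one methodological difference is how the ladder itself is produced. Rather than assembling the two pre-established sequences and then checking by hand that the right square commutes, the paper obtains the whole ladder at once as the morphism of Mayer--Vietoris sequences induced by $\zeta:D\times\S\to\Omega_D$, comparing the covering $D\times\S=((D\setminus D_\R)\times\S)\cup(V\times\S)$ with $\Omega_D=\Omega_{D^*}\cup\Omega_W$. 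This makes commutativity of both squares automatic and simultaneously computes $\rho_1(c_1,c_2)=c_1+\tau_*c_2$ without a separate argument. Your geometric identification of $\phi$ via the antipodal map on $\S$ is correct and arguably more transparent, but your justification of the right square (``both boundary maps arise from the same real-axis decomposition'') is exactly the point where the paper's $\zeta$-naturality does the work cleanly; as written, yours is an intuition that still needs the explicit comparison of connecting homomorphisms to be rigorous. Your caution about sign bookkeeping is well placed: the apparent discrepancy between the paper's $\rho_1(c_1,c_2)=c_1+\tau_*c_2$ and your antipodal computation is absorbed by the Mayer--Vietoris sign convention for $f$, and either way one lands on $\alpha(\gamma)=\gamma-\tau_*\gamma$.
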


\begin{proof}
There is no loss in generality in assuming that $D^+$ is connected (and
therefore $\Omega_D$, too).

We cover $D^+$ by the two open subsets $D^*$ and $W$ as in 
the proof of Proposition~\ref{homology}.

This induces corresponding coverings of $D$, $D\times\S$
and $\Omega_D$:
\begin{align*}
  & D= ( D\setminus D_\R) \ \cup V \ \text{with
    $V=\{z\in\C:z\in W\text{ or }\bar z\in W\}$}
\\
& D\times\S=((D\setminus D_\R)\times\S)\ \cup\ (V\times\S).\\
& \Omega_D= \Omega_{D^*}\ \cup\ \Omega_W.\\
\end{align*}
For each of these coverings we obtain a Mayer-Vietoris sequence
for homology.

We utilize the map $\zeta:D\times\S\to \Omega_D$
given by
\[
(x+yi;J)\mapsto x+yJ.
\]
This yields a morphism between the respective Mayer-Vietoris sequences:
\[
\footnotesize
\minCDarrowwidth10pt
\begin{CD}
 \ldots@>>> H_k((V\setminus D_\R)\times\S) @>>> H_k((D\setminus D_\R)\times\S)
\oplus H_k(V\times\S) @>>> H_k(D\times\S) @>>>\ldots \\
@.  @VVV @VVV @VVV \\
\ldots @>>> H_k(\Omega_{W^*}) @>>> H_k(\Omega_{D^*})
\oplus H_k(\Omega_W) @>>> H_k(\Omega_D) @>>> \ldots\\
\end{CD}
\]
In particular, we get
\[
\footnotesize
\minCDarrowwidth10pt
\begin{CD}
 H_3((V\setminus D_\R)\times\S) @>>> H_3((D\setminus D_\R)\times\S)
\oplus H_3(V\times\S) @>>> H_3(D\times\S) @>>> C @>>> 0 \\
@VVV  @VVV @VVV @VVV @VVV\\
 H_3(\Omega_{W^*}) @>>> H_3(\Omega_{D^*})
\oplus H_3(\Omega_W) @>>> H_3(\Omega_D) @>>> C' @>>> 0 \\
\end{CD}
\]
with 
\begin{align*}
C &= \ker[H_2((V\setminus D_\R)\times\S) \to H_2((D\setminus D_\R)\times\S)
\oplus H_2(V\times\S)]\\
\end{align*}
and
\begin{align*}
C' &= \ker[ H_2(\Omega_{W^*}) \to H_2(\Omega_{D^*})
\oplus H_2(\Omega_W)]\\
\end{align*}

Recall that $H_3(M\times\S)\simeq H_1(M)$
and $H_2(M\times\S)\simeq H_0(M)$ for any $M\subset\C$
due to K\"unneth formula and dimension reasons.
Observe also that $V\setminus D_\R$ 
is the disjoint union of two open subsets
(namely $D^+\cap (V\setminus D_\R)$ and
$D^-\cap (V\setminus D_\R)$) both of 
which are homotopic to $D_\R$.
Recall moreover that $V$ and  $D_\R$ are
homotopy equivalent.

Hence
\begin{align*}
C&\simeq \ker[H_0(V\setminus D_\R) \to H_0(D\setminus D_\R)
\oplus H_0(V)].\\
\end{align*}
and consequently
\begin{align*}
H_0(D_\R)\sim \ker[H_0(V\setminus D_\R) \to  H_0(V)].\\
\end{align*}
where the isomorphism 
may be describe as
\begin{align*}
H_0(D_\R)\ni &
\xi
=\sum_J n_j\{p_j\}
\\\mapsto &
\sum_J n_j\left(\{p_j-\epsilon\}-\{p_j+\epsilon\}\right)
\in \ker[H_0(V\setminus D_\R) \to  H_0(V)]\\
& (p_j\in D_\R)\\
\end{align*}
for a sufficiently small $\epsilon$.

Let
$\eta=\sum_J n_j\left(\{p_j-\epsilon\}-\{p_j+\epsilon\}\right)
\in \ker[H_0(V\setminus D_\R) \to  H_0(V)]$.
Then the homomorphism to $H_0(D\setminus D_\R)$ may be described
as 
$\eta\mapsto (\sum_J n_j,-\sum_J n_j)\in \Z^2\simeq
H_0(D\setminus D_\R)$.

It follows that

\begin{align*}
C&\simeq\tilde H_0(D_\R).\\
\end{align*}

Now
\begin{align*}
C' &= \ker[ H_2(\Omega_{W^*}) \to H_2(\Omega_{D^*})
\oplus H_2(\Omega_W)]\\
&\simeq\ker[ H_2(D_\R\times\S) \to H_2(D^+\times\S)
\oplus H_2(D_\R)]\\
\end{align*}
due to the homotopy equivalences 
(which were verified in the proof of Proposition~\ref{homology})
\[
\Omega_{W^*} \simeq D_\R\times\S ,\quad
\Omega_{D^*} \simeq D^+ \times \S,\quad
\Omega_W \simeq D_\R .
\]
It follows that
\begin{align*}
C' \simeq\ker[ H_0(D_\R) \to H_0(D^+)
\oplus \{0\}]\simeq\ker[ H_0(D_\R) \to H_0(D^+)]\simeq
\tilde H_0(D_\R).\\
\end{align*}

The aforementioned homotopy equivalences also imply
$H_3(\Omega_{D^*})\simeq H_1(D^+)$
and $H_3(\Omega_{W})=\{0\}$.
Combining all these facts, the above commutative diagram 
turns into the following
commutative diagram:
\[
\begin{CD}
0 @>>> H_1(D^+)\oplus H_1(D^-) @>\eta_1>> H_1(D) @>\eta_2>> \tilde H_0(D_\R) @>>> 0\\
@VVV @V\rho_1VV @V\rho_2VV @VV\rho_3=idV @VVV \\
0 @>>> H_1(D^+) @>\mu_1>> H_3(\Omega_D) @>\mu_2>> \tilde H_0(D_\R)@>>> 0 \\
\end{CD}
\]
The homomorphism $\rho_1$ is induced by the embedding
\[
D\setminus D_\R=D^+\cup D^-\longrightarrow \Omega_{D^*}
\]
and 
\[
H_3(\Omega_{D^*})\simeq H_3(D^+\times\S)\simeq H_1(D^+).
\]
Hence $\rho_1(c_1,c_2)=c_1+\tau_*c_2$ if $c_1$ is a $1$-cycle
in $D^+$ and $c_2$ a $1$-cycle in $D^-$.
In particular, $\rho_1$ is surjective with kernel
\[
\ker\rho_1=\{(c,-\tau_*c):c\in H_1(D^+)\}
\]
$\rho_2$ is defined by
\[
H_1(D)\simeq H_3(D\times\S)\ \stackrel{\zeta_*}\longrightarrow\ H_3(\Omega_D).
\]
We set $\beta=\rho_2$ and define $\alpha$ via 
$\alpha(c)=\eta_1(c,-\tau_*c)$. 
Injectivity of $\alpha$ is implied by injectivity of $\eta_1$.
To check surjectivity of $\beta$, let
$s\in H_3(\Omega_D)$. Since $\rho_3$ is an isomorphism, we find an
element $c\in H_1(D)$ with $\eta_2(c)=\mu_2(s)$. Then $s-\rho_2(c)
\in\ker\mu_2=\Image(\mu_1)$. Now $\rho_1$ is surjective. Therefore there
exists $a\in H_1(D^+)\oplus H_1(D^-)$ with 
\[
s-\rho_2(c)=\mu_1(\rho_1(a))=\rho_2(\eta_1(a))
\ \Rightarrow\ s=\rho_2\left(c+\eta_1(a)\right).
\]

Let us check that $\beta\circ\alpha=0$:
\[
\beta(\alpha(c))=\rho_2(\alpha(c))
=\rho_2(\eta_1(c,-\tau_*c))=\mu_1(\rho_1(c,-\tau_*c))=\mu_1(0)=0
\]
Finally, assume $b\in\ker\beta$. We have to show that $b$ is in the
image of $\alpha$. Now $\beta(b)=\rho_2(b)=0$ implies
\[
\mu_2(\rho_2(b))=\rho_3(\eta_2(b))=\eta_2(b)=0.
\]
Thus $b\in\ker(\eta_2)=\Image(\eta_1)$, i.e.,
there is an element $(c',c'')\in H_1(D^+)\oplus H_1(D^-)$
with $\eta_1(c',c'')=b$. Since $\mu_1$ is injective,
and $\rho_2(b)=0$, we know that 
\[
0=\rho_1(c',c'')=c'+\tau_*c''.
\]
Hence $c''=-\tau_*c'$.
It follows that $b=\alpha(c')$.
\end{proof}

\begin{corollary}\label{cor-inj-2}
Let $D\subset \Dp$ be symmetric open subsets in $\C$ 
such that $H_1(\Omega_D)\to H_1(\Omega_{\Dp})$
and $H_3(\Omega_D)\to H_3(\Omega_{\Dp})$
are both injective. Then
$H_1(D)\to H_1(\Dp)$ is injective, too.
\end{corollary}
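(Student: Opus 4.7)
The plan is to invoke the natural short exact sequence
\[
0 \to H_1(D^+) \to H_1(D) \to H_3(\Omega_D) \to 0
\]
from the preceding proposition, applied functorially to the inclusion $i\colon D \hookrightarrow \Dp$. This produces a commutative diagram of short exact sequences whose vertical arrows are $\phi\colon H_1(D^+) \to H_1(\Dp^+)$, $\psi\colon H_1(D) \to H_1(\Dp)$ (the map I want to show is injective), and $\chi\colon H_3(\Omega_D) \to H_3(\Omega_{\Dp})$. A standard four-lemma diagram chase then reduces injectivity of $\psi$ to injectivity of $\phi$ and $\chi$.

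Injectivity of $\chi$ is a direct hypothesis. For $\phi$, I plan to use the natural isomorphism $H_1(D^+) \cong H_1(\Omega_D)$ supplied by Corollary~\ref{cor-homology} (the sequence \eqref{seqh1}), together with the corresponding isomorphism for $\Dp$. Under these two identifications, $\phi$ is transported to the inclusion-induced map $H_1(\Omega_D) \to H_1(\Omega_{\Dp})$, which is injective by the other hypothesis of the corollary. Hence $\phi$ is injective, and the diagram chase then delivers the injectivity of $\psi$.

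The only real obstacle, and thus the only point that needs attention, is the naturality of the two exact sequences that I am pasting together. For the sequence \eqref{seqOD} from the preceding proposition, naturality is immediate from the explicit formulas for $\alpha$ and $\beta$: $\alpha(\gamma) = \gamma - \tau_*\gamma$ uses only complex conjugation, and $\beta(\gamma) = \zeta_*(\gamma \times [\S])$ uses only the map $\zeta\colon D \times \S \to \Omega_D$, both of which commute with enlarging $D$ to $\Dp$. For the isomorphism $H_1(D^+) \cong H_1(\Omega_D)$ of \eqref{seqh1}, naturality follows from tracing through the Mayer--Vietoris construction in the proof of Proposition~\ref{homology}; concretely, the isomorphism is realized by the composition $D^+ \simeq D^* \hookrightarrow \Omega_{D^*} \hookrightarrow \Omega_D$, where the middle inclusion sends $p$ to the slice copy $(p, I_0)$ for a fixed $I_0 \in \S$. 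This map manifestly commutes with the inclusion $D \hookrightarrow \Dp$. Once naturality is noted, no further work is required beyond the diagram chase.
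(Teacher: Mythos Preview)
Your proposal is correct and follows essentially the same route as the paper: both use the commutative diagram obtained from \eqref{seqOD} for $D\hookrightarrow\Dp$, identify the left vertical map with $H_1(\Omega_D)\to H_1(\Omega_{\Dp})$ via \eqref{seqh1}, and conclude by the snake lemma (your ``four-lemma diagram chase''). Your explicit check of naturality is a welcome addition that the paper leaves implicit.
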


\begin{proof}
First recall that
 $H_1(\Omega_D)\simeq H_1(D^+)$
(and $H_1(\Omega_{\Dp})\simeq H_1({\Dp}^+)$)
due to \eqref{seqh1}.

Second, we consider the following commutative diagram
induced from \eqref{seqOD} via the map $D\hookrightarrow \Dp$.
\[
\begin{CD}
0 @>>> H_1(D^+) @>>> H_1(D) @>>> H_3(\Omega_D)@>>> 0 \\
@VVV @VVV @VVV @VVV @VVV \\
0 @>>> H_1(\Dp^+) @>>> H_1(\Dp) @>>> H_3(\Omega_\Dp)@>>> 0 .
\end{CD}
\]
Now the snake lemma 
(see e.g.~\cite{L},III.\paragraf 9)
yields the statement.
\end{proof}

\begin{lemma}\label{lem-H3B}
Let $P$ be a symmetric compact connected subset of $\C$ such that
$P\cap\R$ is non-empty and connected.
Let $P'$ be a non-empty symmetric closed subset of $P$ and define
\begin{align*}
D  &= \C\setminus P, \\
\Dp  &= \C\setminus P'. \\
\end{align*}
Then
$H_3(\Omega_D)\to H_3(\Omega_{\Dp})$
is injective.
\end{lemma}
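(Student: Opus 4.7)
The plan is to factor $H_3(\Omega_D)\to H_3(\Omega_{\Dp})$ through $H_3(\H\setminus\{c\})\simeq\Z$ for a well-chosen point $c\in\Omega_{P'}$, which exists because $P'$ is non-empty (if $P'$ contains a non-real $p_0$, choose any $I_0\in\S$ and take $c=\Re(p_0)+I_0\Im(p_0)$; otherwise take $c$ to be any real point of $P'$). Since $\Omega_{P'}\subset\Omega_P$, we get inclusions $\Omega_D\hookrightarrow\Omega_{\Dp}\hookrightarrow\H\setminus\{c\}$, and it suffices to show the composed map on $H_3$ is injective.

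First I would pin down $H_3(\Omega_D)\simeq\Z$ via Corollary~\ref{cor-homology}. Because $P$ is a compact connected subset of $\C$, Alexander duality (or a direct winding-number argument) gives $H_1(D)\simeq\Z$. The assumption that $P\cap\R=[a,b]$ is non-empty and connected means $D_\R=(-\infty,a)\sqcup(b,\infty)$, and any large semicircle in $\H^+$ avoiding the compact set $P$ shows that both components lie in the \emph{same} connected component of $D^+$; hence $\hat H_0(D_\R)\simeq\Z$. Plugging these into the exact sequence~\eqref{seqhD} forces $H_1(D^+)\oplus H_1(D^-)=0$, and then the exact sequence of Corollary~\ref{cor-homology} degenerates to an isomorphism $H_3(\Omega_D)\simeq\hat H_0(D_\R)\simeq\Z$.

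Next I would exhibit the generator explicitly. Fix $R>0$ so large that $P\subset\{|z|<R\}$ and consider the path $\gamma(t)=Re^{i\pi t}$, $t\in[0,1]$, which lies in $D^+$ and joins $R\in(b,\infty)$ to $-R\in(-\infty,a)$ through the open upper half plane. By the geometric recipe spelled out immediately after the proof of Proposition~\ref{homology}, the class $[\Omega_\gamma]\in H_3(\Omega_D)$ is a generator. A direct computation gives $\Omega_\gamma=\{q\in\H:|q|=R\}$, i.e.\ the standard round $3$-sphere of radius $R$ centred at the origin.

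Since $c\in\Omega_P$ lies in the bounded component $\{|q|<R\}$ of $\H\setminus\Omega_\gamma$, the very same cycle $[\Omega_\gamma]$ also represents a generator of $H_3(\H\setminus\{c\})\simeq\Z$. Therefore the composite $H_3(\Omega_D)\to H_3(\H\setminus\{c\})$ sends a generator to a generator and is an isomorphism, which proves injectivity of the first factor $H_3(\Omega_D)\to H_3(\Omega_{\Dp})$. The only real obstacle is realizing the abstract generator of $\hat H_0(D_\R)$ geometrically by a curve whose associated axially symmetric set is a round $3$-sphere large enough to enclose $\Omega_P$; compactness of $P$ (together with connectedness of $P\cap\R$) is exactly what makes such a large semicircle available.
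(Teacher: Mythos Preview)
Your proof is correct and follows essentially the same approach as the paper: both compute $H_3(\Omega_D)\simeq\Z$ from the exact sequences \eqref{seqhD} and \eqref{seqh3}, then exhibit a large round $3$-sphere as a generator that remains non-trivial after pushing forward. Your explicit factorization through $H_3(\H\setminus\{c\})$ is exactly the mechanism the paper invokes implicitly when it asserts that the sphere class is non-trivial in $H_3(\Omega_{\Dp})$ because the non-empty set $P'$ lies in its interior.
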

\begin{proof}
By construction we have
\[
H_1(D)\simeq\Z, \quad \tilde H_0(D_\R)\simeq\Z.
\]
Using \eqref{seqhD} it follows that $H_1(D^+)=\{0\}$.
Then we may apply \eqref{seqh3} to conclude
that $H_3(\Omega_D)\simeq\Z$.

Let $R>\max\{|z|:z\in P\}$. Regard the $3$-sphere $S$ with 
center $0$ and radius $R$ in $\H$.
Because $P$ is contained in the interior of the sphere, $S$ defines
a non-trivial homology class in $H_3(\Omega_D)$. Since $P'$ is also
non-empty and in the interior of the sphere, the homology class of
$S$ in $H_3(\Omega_{\Dp})$ is likewise non-zero.
Thus the homomorphism $i_*:H_3(\Omega_D)\to H_3(\Omega_{\Dp})$ maps
a non-trivial element of $H_3(\Omega_D)$ to a non-trivial element
of $H_3(\Omega_{\Dp})$. This implies the statement because
$H_3(\Omega_D)\simeq\Z$.
\end{proof}

\begin{proposition}\label{inj-h3}
Let $D\subset \Dp$ be symmetric open subsets of $\C$ such that the
natural homomorphism $H_1(D)\to H_1(\Dp)$ is injective.

Then
$H_3(\Omega_D)\to H_3(\Omega_{\Dp})$ is injective, too.
\end{proposition}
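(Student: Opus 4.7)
My plan is to exploit the naturality of the exact sequence \eqref{seqOD} under the inclusion $D \hookrightarrow \Dp$, reducing the claim to the hypothesized injectivity of $H_1(D) \to H_1(\Dp)$ via a $\tau_*$-equivariance argument. The critical preliminary observation is that the embedding $\alpha: H_1(D^+) \hookrightarrow H_1(D)$ of \eqref{seqOD}, given by $\alpha(c) = c - \tau_* c$ with $\tau$ complex conjugation, has image equal to the full $\tau_*$-anti-invariant subgroup $\{x \in H_1(D) : \tau_* x = -x\}$, and similarly for $\Dp$.

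The non-trivial direction of this identification---that every $\tau_*$-anti-invariant $x \in H_1(D)$ actually lies in $\Image(\alpha)$---is the technical heart of the proof, and I expect it to be the main obstacle. I would establish it by first mapping $x$ into $\hat H_0(D_\R)$ via the exact sequence \eqref{seqhD}. Since $\tau$ fixes $D_\R$ pointwise, $\tau_*$ acts trivially on $\hat H_0(D_\R)$, so the image of $x$ equals its own negative there; but $\hat H_0(D_\R)$ sits inside the free abelian group $H_0(D_\R)$ and is therefore itself torsion-free, so the image is zero. Exactness of \eqref{seqhD} then writes $x = c_+ + c_-$ with $c_\pm \in H_1(D^\pm)$, and since $\tau_*$ interchanges $H_1(D^+)$ with $H_1(D^-)$, anti-invariance of $x$ forces $c_- = -\tau_* c_+$; thus $x = c_+ - \tau_* c_+ = \alpha(c_+)$.

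With this identification in hand, the conclusion is a short diagram chase. Let $\xi \in \ker[H_3(\Omega_D) \to H_3(\Omega_{\Dp})]$ and lift $\xi$ through the surjection $\beta$ of \eqref{seqOD} to some $\tilde\xi \in H_1(D)$. Denoting by $i_*$ the map $H_1(D) \to H_1(\Dp)$ induced by inclusion, the element $i_*(\tilde\xi)$ lies in $\ker(\beta') = \Image(\alpha')$, and hence is $\tau_*$-anti-invariant by the identification applied to $\Dp$. Since $i_*$ is $\tau_*$-equivariant and injective by hypothesis, $\tilde\xi$ is itself $\tau_*$-anti-invariant, hence (by the identification applied to $D$) lies in $\Image(\alpha)$, and so $\xi = \beta(\tilde\xi) = 0$. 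Note that over $\Q$ the key identification would follow immediately by decomposing into $\pm 1$-eigenspaces of $\tau_*$; the extra work above is precisely what is needed to make the argument go through integrally.
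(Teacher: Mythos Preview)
Your ``critical preliminary observation'' --- that $\Image(\alpha)$ equals the full $\tau_*$-anti-invariant subgroup of $H_1(D)$ --- is false, and the error occurs precisely where you try to show that an anti-invariant $x$ has vanishing image in $\hat H_0(D_\R)$. You implicitly assume that the Mayer--Vietoris connecting map $\partial\colon H_1(D)\to \hat H_0(D_\R)$ in \eqref{seqhD} commutes with $\tau_*$. It does not: since $\tau$ \emph{interchanges} the two pieces $U^+$ and $U^-$ of the covering, one has $\partial\circ\tau_* = -\tau_*\circ\partial$. Combined with the fact that $\tau_*$ is the identity on $\hat H_0(D_\R)$, this gives $\partial(\tau_* x)=-\partial x$, so from $\tau_* x=-x$ you only get the tautology $\partial x=\partial x$, not $\partial x=0$.

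A concrete counterexample is $D=\C^*$. Here $D^+=\{z:\Im z\ge 0,\ z\ne 0\}$ is contractible, so $H_1(D^+)=0$ and hence $\Image(\alpha)=0$. On the other hand $H_1(\C^*)\simeq\Z$, and complex conjugation reverses the orientation of the unit circle, so $\tau_*=-\mathrm{id}$ on $H_1(\C^*)$: \emph{every} class is anti-invariant, yet none (except $0$) lies in $\Image(\alpha)$. Consequently your diagram chase breaks down: knowing that $i_*(\tilde\xi)\in\Image(\alpha')$ only tells you $i_*(\tilde\xi)$ is anti-invariant, hence $\tilde\xi$ is anti-invariant, but this does \emph{not} force $\tilde\xi\in\Image(\alpha)$. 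The paper's proof avoids this obstacle altogether by a geometric argument: assuming a nonzero $\alpha\in\ker\bigl(H_3(\Omega_D)\to H_3(\Omega_{\Dp})\bigr)$, it analyzes the image $\alpha_0\in\hat H_0(D_\R)$, locates a separating point $q\in\R\setminus D$, builds an explicit symmetric arc $P\subset\C\setminus D$ reaching into $\C\setminus\Dp$, and obtains a contradiction via Lemma~\ref{lem-H3B}.
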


\begin{proof}
Assume the contrary.
Let 
\[
\alpha\in \ker\left (H_3(\Omega_D)\to H_3(\Omega_{\Dp})\right ),\ \alpha\ne 0.
\]
The injectivity of $H_1(D)\to H_1(\Dp)$ implies that 
$H_1(D^+)\to H_1(\Dp^+)$ is injective too
(Corollary~\ref{cor-inj-1}).
The inclusion map $D\to \Dp$ applied to \eqref{seqh3}
yields the following commutative diagram
\[
\begin{CD}
0 @>>> H_1(D^+) @>>> H_3(\Omega_D) @>>> \hat H_0(D_\R) @>>> 0\\
@VVV @VVV @VVV @VVV @VVV \\
0 @>>> H_1(\Dp^+) @>>> H_3(\Omega_{\Dp}) @>>> \hat H_0(\Dpr) @>>> 0.\\
\end{CD}
\]
Let $\alpha_0$ denote the image of
 $\alpha$ in $\hat H_0(D_\R)$. 
First, we claim that $\alpha_0$ can not vanish. Indeed, if
$\alpha_0=0$, then $\alpha$ is induced by an element $\beta
\in H_1(D^+)$. Evidently $\alpha\ne 0$ implies $\beta\ne 0$.
But now we obtain a contradiction, since 
$H_1(D^+)\to H_1(\Dp^+)$ and $H_1(\Dp^+)\to H_3(\Omega_{\Dp})$
are both injective, but $\alpha$ is mapped to zero in 
$H_3(\Omega_{\Dp})$. Hence $\alpha_0\ne 0$.

Second, by assumption the image of $\alpha$
in $H_3(\Omega_{\Dp})$ vanishes, implying that the image in
$\hat H_0(\Dpr)$ also vanishes.
Thus $\alpha_0$ is in the kernel of $\hat H_0(D_\R)
\to\hat H_0(\Dpr)$. 
Let $\alpha_0$ be represented by the formal $\Z$-linear combination
$\sum_{x\in I}n_x\{x\}$ where $I$ is a finite subset of $D_\R$.
Since $\alpha_0\ne 0$, but $\sum n_k=0$ (
because $\alpha$ is in the kernel of the morphism from
$H_0(D_\R)$ to $H_0(D)$),
we can find a point $q\in\R\setminus D$ such that 
\[
\sum_{p\in I; p>q}n_p\ne 0.
\]
Fix such a point $q$.
Let $B$ denote the connected component of $D^c=\C\setminus D$
containing $q$.

Fix $p_1,p_2\in I$ with $p_1<q<p_2$ and such that $I\cap]p_1,p_2[=\{\}$.

Note that $\alpha_0$ is mapped onto zero in $\hat H_0(\Dpr)$
which implies that $[p_1,p_2]$ is contained in $\Dpr$. 

Because $\alpha$ is mapped to zero in $H_0(D^+)$,
we know that $p_1$ and $p_2$ are contained in the same
connected component of $D^+$. Therefore $p_1$ and $p_2$
can be connected by a path $\gamma$ in $D^+$. This path, combined with
its image under conjugation, yields a closed curve inside $D$ which 
surrounds $q$. Therefore $B$ must be bounded, and $B\cap\R\subset
]p_1,p_2[$.

Combining the latter fact with $[p_1,p_2]\subset\Dpr$
implies that
$\R\cap ( B\setminus\Dp)=\{\}$.

Since we assumed that $H_1(D)\to H_1(\Dp)$ is injective, boundedness
of $B$ implies
$B\cap {\Dp}^c\ne\{\}$.

\begin{figure}
\includegraphics[width=\linewidth]{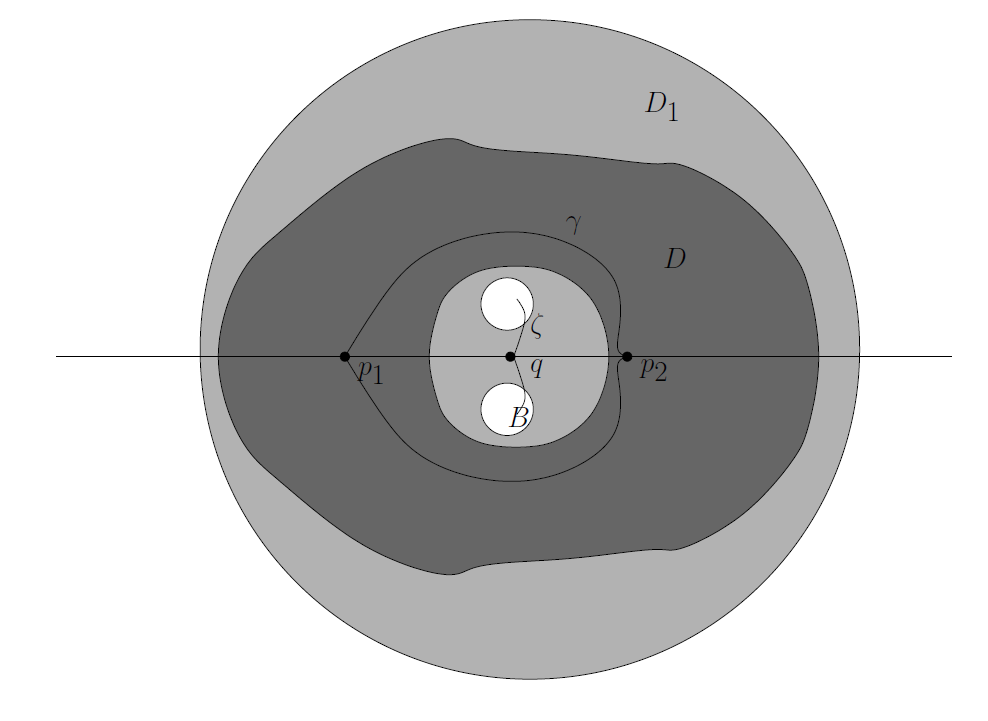}
\end{figure}

We choose a path $\zeta:[0,1]\to B$ such that
$\zeta(0)=q$, $\zeta(1)\not\in\Dp$ and $\zeta(t)\not\in\R$ for $t>0$.
Define
\[
P=\{ z\in\C: \exists\ t\in[0,1], z=\zeta(t)\text{ or }\overline{\zeta(t)}\}.
\]

Observe that $P\cap\R=\{q\}$.

Now we consider the following diagram of inclusion maps
\[
\begin{CD}
D @>>> \Dp \\
@VVV @VVV\\
\C\setminus P @>>> \C\setminus (P\cap {\Dp}^c).\\
\end{CD}
\]

From Lemma~\ref{lem-H3B} we obtain injectivity of 
\[
H_3(\Omega_{\C\setminus P})\to H_3( \Omega_{\C\setminus (P\cap {\Dp}^c)})
\]
which leads to a contradiction:  
First, by construction $\alpha_0$ is mapped to a non-zero
element of $\tilde H_0(\R\setminus P)$.
Due to \eqref{seqh3}
it follows that $\alpha$ is mapped to a non-zero
element of $H_3(\Omega_{\C\setminus P})$.
Second, its image in $H_3(\Omega_\Dp)$
is zero, which forces its image in $H_3(\Omega_{\C\setminus (P\cap \Dp^c)})$ 
to be zero, because $\Dp\subset \C\setminus (P\cap \Dp^c)$.
\end{proof}

\begin{proposition}\label{equiv-homology}
Let $D\subset \Dp$ be symmetric open subsets of $\C$
 with corresponding axially symmetric subsets $\Omega_D
\subset\Omega_{\Dp}$in $\H$.

Then $H_1(D)\to H_1(\Dp)$ is injective if and only if 
both $H_1(\Omega_D)
\to H_1(\Omega_{\Dp})$ and $H_3(\Omega_D)
\to H_3(\Omega_{\Dp})$ are injective.
\end{proposition}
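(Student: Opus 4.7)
The plan is to observe that this proposition is essentially a bookkeeping exercise, combining results that have already been established in the paper. Both directions follow by invoking the right previous results, so my task is mainly to identify the correct assembly.

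For the forward direction, I would assume $H_1(D)\to H_1(\Dp)$ is injective and derive the two injectivity statements for $\Omega_D$. First, Corollary~\ref{cor-inj-1} immediately gives injectivity of $H_1(D^+)\to H_1(\Dp^+)$. Next, the short exact sequence \eqref{seqh1} identifies $H_1(\Omega_D)\simeq H_1(D^+)$ and $H_1(\Omega_{\Dp})\simeq H_1(\Dp^+)$ naturally, so injectivity of the induced map on $H_1$ of the quaternionic domains follows. Finally, injectivity of $H_3(\Omega_D)\to H_3(\Omega_{\Dp})$ is precisely the content of Proposition~\ref{inj-h3}.

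For the reverse direction, I would simply cite Corollary~\ref{cor-inj-2}, which was proven for exactly this purpose by using \eqref{seqOD} together with the snake lemma applied to the inclusion $D\hookrightarrow\Dp$.

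Since each half of the equivalence is already packaged in an earlier corollary or proposition, there is no real obstacle remaining; the proof is just a matter of citing Corollary~\ref{cor-inj-1}, the sequence \eqref{seqh1}, Proposition~\ref{inj-h3}, and Corollary~\ref{cor-inj-2} in the right order. The only substantive work was done earlier: constructing the exact sequences \eqref{seqh1}, \eqref{seqh3}, and \eqref{seqOD}, and proving the nontrivial Proposition~\ref{inj-h3}, whose geometric argument (producing a bounded component $B$ of $\C\setminus D$ meeting $\C\setminus\Dp$ only off the real axis and invoking Lemma~\ref{lem-H3B}) is the true heart of the equivalence. Thus my proof is essentially a two-paragraph assembly with appropriate references.
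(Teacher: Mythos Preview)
Your proposal is correct and follows essentially the same route as the paper: the paper likewise cites Corollary~\ref{cor-inj-2} for one direction and, for the other, combines Proposition~\ref{inj-h3} with Corollary~\ref{cor-inj-1} and \eqref{seqh1}. The only detail you omit is the preliminary reduction to the case where $\Omega_D$ is connected (needed because \eqref{seqh1} is stated in Proposition~\ref{homology} under that hypothesis), but this is a trivial remark since homology of a disjoint union is the direct sum, and the same sequence is recorded in Corollary~\ref{cor-homology} without the connectedness assumption.
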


\begin{proof}
First we recall that the homology of a disjoint union $X=A\cup B$
is simply the direct sum of the homology of $A$ and $B$.
For this reason there is no loss in generality in assuming that
$\Omega_D$ is connected.
 
If both $H_1(\Omega_D)
\to H_1(\Omega_{\Dp})$ and $H_3(\Omega_D)
\to H_3(\Omega_{\Dp})$ are injective, injectivity of
$H_1(D)\to H_1(\Dp)$ follows from Corollary~\ref{cor-inj-2}.

Now assume $H_1(D)\to H_1(\Dp)$ is injective.
Then $H_3(\Omega_D)
\to H_3(\Omega_{\Dp})$ is injective due to Proposition~\ref{inj-h3}.
Furthermore injectivity of $H_1(\Omega_D)
\to H_1(\Omega_{\Dp})$ follows from Corollary~\ref{cor-inj-1}
combined with \eqref{seqh1}.
\end{proof}

\section{Appendix}
\subsection{Some planar topology}
Here we show that for a pair of domains $G\subset H$ in $\C$
the group homomorphism $i_*:H_1(G)\to H_1(H)$ induced by
the inclusion map $i$ is injective if and only if every bounded
connected component of $G^c=\C\setminus G$ hits a bounded
connected component of $H^c$. This is well-known, but
we provide a new proof based on an identification of $H_1(G)$
with a certain function space, namely $\Cc(G^c,\Z)$.

\begin{proposition}
Let $G$ be an open subset of $\C$ and denote its complement by $G^c$.

Then there is a natural isomorphism $\xi$ between $H_1(G,\Z)$
and $\Cc(G^c,\Z)$
(i.e.~the space of $\Z$-valued continuous (locally constant) functions
with compact support on $G^c$).
\end{proposition}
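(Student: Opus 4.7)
The plan is to define $\xi$ through the winding number. For a 1-cycle $\gamma$ in $G$ and a point $p\in\C\setminus|\gamma|$, let $n(\gamma,p)\in\Z$ denote the winding number of $\gamma$ around $p$. Since a cycle $\gamma\subset G$ automatically avoids $G^c$, I set
\[
\xi([\gamma])(p)=n(\gamma,p)\qquad(p\in G^c).
\]
The first task is to check that $\xi$ lands in $\Cc(G^c,\Z)$. The winding number is locally constant on $\C\setminus|\gamma|$, and hence on $G^c$, so $\xi([\gamma])$ is continuous (into the discrete group $\Z$). It vanishes on the unbounded component of $\C\setminus|\gamma|$, so its support is contained in the union of the finitely many bounded components of $\C\setminus|\gamma|$; that union is bounded in $\C$, and its intersection with $G^c$ is closed in $\C$, hence compact. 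Additivity of winding numbers in $\gamma$ makes $\xi$ a group homomorphism, and it descends to $H_1(G,\Z)$ because if $\gamma=\partial\sigma$ for a singular $2$-chain $\sigma$ in $G$, then $n(\gamma,p)=0$ for every $p\in G^c$ (as $\sigma$ does not meet $p$).

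Next I would establish surjectivity, which is the most concrete step. The target $\Cc(G^c,\Z)$ is freely generated by the indicator functions $\chi_C$ where $C$ runs through the bounded connected components of $G^c$. Fix such a $C$. Being closed and bounded, $C$ is compact; the set $G^c\setminus C$ is a closed set disjoint from $C$, so $\delta=\mathrm{dist}(C,G^c\setminus C)>0$. Choose a grid of closed axis-aligned squares in $\C$ of side length $\epsilon<\delta/10$, and let $U$ be the union of all closed squares of this grid that meet $C$. Then $U$ is compact, contains $C$ in its interior, and $\overline{U}\setminus C\subset G$, so the oriented boundary $\partial U$ is a $1$-cycle in $G$. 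By construction $n(\partial U,p)=1$ for $p\in C$ and $n(\partial U,p)=0$ for $p\in G^c\setminus C$, hence $\xi([\partial U])=\chi_C$.

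For injectivity I would argue by a compact exhaustion. Given a cycle $\gamma\subset G$ with $n(\gamma,p)=0$ for all $p\in G^c$, choose a compact set $K\subset G$ containing $|\gamma|$, and thicken $K$ to a relatively compact open neighborhood $V\Subset G$ whose complement has only finitely many bounded connected components, each meeting $G^c$. Such a $V$ can be produced by a fine grid as above, keeping squares within a small tube around $K$; after slight perturbation $V$ is a finitely-connected bounded planar open set, for which $H_1(V,\Z)$ is well known to be free abelian, generated by small loops $\sigma_1,\dots,\sigma_m$ encircling the bounded components $B_1,\dots,B_m$ of $\C\setminus V$, and detected by winding numbers. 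Write $[\gamma]=\sum a_j[\sigma_j]$ in $H_1(V,\Z)$. Each $B_j$ meets $G^c$, say at a point $p_j$; then $a_j=n(\gamma,p_j)=0$ by hypothesis. Thus $\gamma$ bounds in $V$, a fortiori in $G$.

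Finally, naturality: if $G\subset G'$ are open in $\C$, then $G'^{c}\subset G^{c}$, and restriction $f\mapsto f|_{G'^{c}}$ defines a homomorphism $\Cc(G^{c},\Z)\to\Cc(G'^{c},\Z)$. Naturality of $\xi$ with respect to the inclusion-induced $H_1(G)\to H_1(G')$ is immediate from the pointwise definition of $\xi$, since the winding number of a cycle about a point does not depend on the ambient open set. The main obstacle in this program is the injectivity step: the need to approximate an abstract cycle in $G$ by one lying in a finitely-connected subdomain without losing either its homology class or its winding-number profile. The grid thickening of a compact exhaustion handles this cleanly, but it is the one spot where one must be careful that every bounded component of the approximating complement is actually detected by a point of $G^c$.
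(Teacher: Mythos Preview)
Your definition of $\xi$ and the verification that it lands in $\Cc(G^c,\Z)$ are fine and match the paper. The injectivity argument via exhaustion by finitely connected subdomains is different from the paper's (which directly exhibits a bounding $2$-chain $\sum_k k\,Z_k$ with $Z_k=\{z\in G:n_\gamma(z)=k\}$), but your approach is standard and, once you fill in the hull construction so that every bounded component of $\C\setminus V$ meets $G^c$, it works.

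There is, however, a genuine gap in your surjectivity argument. The assertion that $\Cc(G^c,\Z)$ is freely generated by the indicator functions $\chi_C$ of the bounded connected components $C$ of $G^c$ is false in general, because connected components of $G^c$ need not be open in $G^c$. Concretely, take $G=\C\setminus K$ with $K=\{0\}\cup\{1/n:n\ge 1\}$. The component $C=\{0\}$ is not open in $K$, so $\chi_{\{0\}}\notin\Cc(K,\Z)$; moreover your distance $\delta=\mathrm{dist}(C,G^c\setminus C)$ equals $0$, so the grid construction of $U$ collapses. Meanwhile the constant function $1$ on $K$ lies in $\Cc(K,\Z)$ but is not a finite $\Z$-linear combination of the admissible $\chi_{\{1/n\}}$.

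The fix is simple: reduce instead to indicators of \emph{compact clopen} subsets of $G^c$. Any $f\in\Cc(G^c,\Z)$ takes only finitely many values (its support is compact and $f$ is locally constant), so $f=\sum_j n_j\,\chi_{S_j}$ with each $S_j=f^{-1}(n_j)$ compact and clopen. For such $S$ the set $G^c\setminus S$ is closed, so $\mathrm{dist}(S,G^c\setminus S)>0$ and your grid argument goes through verbatim with $S$ in place of $C$. The paper achieves the same reduction and then, instead of a grid, extends the $\{0,1\}$-valued function to a smooth $F$ on $\C$ and invokes Sard's theorem to produce a regular level curve $\{F=c\}\subset G$ with the required winding numbers.
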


\begin{proof}
A cycle $\gamma\in H_1(G,\Z)$  defines a function $n_\gamma$
on $\C\setminus supp(\gamma)$ by the {\em winding number}
\[
n_\gamma(z)=\int_\gamma \frac{dw}{w-z}.
\]
The winding number
$n_\gamma$ is locally constant on $\C\setminus|\gamma|$, therefore
$n_\gamma$ is continuous on $G^c$. It is compactly supported, because
$n_\gamma(z)=0$ for all $z$ with $|z|>\max\{|w|:w\in|\gamma|\}$.

Now assume that $\gamma$ is in the kernel of this map 
$\xi:\gamma\mapsto n_\gamma$. 
For each $k\in\Z$ let $Z_k$ denote the cycle defined by the open set
$\{z\in G:n_\gamma(z)=k\}$. Then the homology class of $\gamma$
in $H_1(G,\Z)$ vanishes, because $\gamma=\partial\left( \sum_k kZ_k\right)$
(here $\partial$ denotes the boundary operator in homology).
This proves injectivity of the group homomorphism  
$\xi:H_1(G,\Z)\to\Cc(G^c,\Z)$.

Conversely let $f\in\Cc(G^c,\Z)$. Since $f$ has compact support and takes
values in $\Z$, $f$ is a finite sum of functions $\pm f_i$ with
$f_i\in\Cc(G^c,\Z)$ and $f_i(z)\in\{0,1\}$ for all $z,i$.
We may therefore without loss of generality
assume that $f(G^c)=\{0,1\}$.
Let $R>\sup\{|z|:f(z)\ne 0\}$. Now we define a function $g$
on $G^c\cup\{z:|z|\ge R\}$ as follows
\[
g(z)=
\begin{cases}
f(z) & \text{if $z\in G^c$},\\
0    & \text{if $|z|\ge R$}.\\
\end{cases}
\]
We extend $g$ to a (real-valued)
smooth function $F$
defined on all of $\C$.
Sards theorem implies that $\{z:F(z)=c\}$ is a smooth submanifold
of $\C$ for almost all $c\in]0,1[$.
Each level set $\{z:F(z)=c\}$ ($0<c<1$)
is compact, because $F(z)=0$ if $|z|\ge R$.
Therefore almost every $c\in]0,1[$ defines a 
finite union of disjoint closed smooth real curves
in $\C$ which circumscribe $F=1$.
The homology class of this curve defines the element of $H_1(G,\Z)$ 
corresponding to the function $f$. 
\end{proof}

\begin{lemma}\label{3.2}
Let $A$ be a closed subset of $\C$ and let $B$ be a bounded connected
component of $A$.
Assume that $B\ne A$ and let $q\in A\setminus B$.
Then there exists a function
$f\in \Cc(A,\Z)$ which is identically $1$ on $B$ such that $f(q)= 0$.
\end{lemma}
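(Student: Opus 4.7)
The plan is to construct $f$ as the indicator function $\mathbf{1}_V$ of a bounded subset $V\subset A$ that is clopen in $A$, contains $B$, and avoids $q$. Such an $f$ is locally constant (hence continuous), $\Z$-valued, equal to $1$ on $B\subset V$ and to $0$ at $q$, with support $V$ closed and bounded in $\C$, hence compact, so $f\in\Cc(A,\Z)$. Everything reduces to producing $V$.

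First I would reduce to a compact setting. Since $B$ is bounded and $q\in\C$, choose $R>0$ with $B\cup\{q\}\subset\{|z|<R\}$ and set $K=A\cap\{|z|\le R\}$. Then $K$ is a compact Hausdorff space, and $B$ is still a connected component of $K$: any connected subset of $K\subset A$ containing $B$ is a connected subset of $A$ containing $B$, hence contained in $B$ by maximality of connected components.

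Next I would invoke the classical fact that in a compact Hausdorff space every connected component equals its quasi-component (the intersection of all clopen sets containing it). Since $B$ and $q$ lie in different connected components of $K$, this produces a clopen subset $V$ of $K$ with $B\subset V$ and $q\notin V$.

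The most delicate step is upgrading ``clopen in $K$'' to ``clopen in $A$'', since $A$ is in general strictly larger than $K$. Now $V$ is automatically closed in $A$ (closed in $K$, which is closed in $A$). For openness, observe that $V$ and $K\setminus V$ are disjoint compact subsets of $\C$, hence $d:=\mathrm{dist}(V,K\setminus V)>0$; and $V\subset\{|z|<R\}$ is compact, so $d':=\mathrm{dist}(V,\{|z|=R\})>0$. Fix $\delta<\min(d,d')$. Any $z\in A$ with $\mathrm{dist}(z,V)<\delta$ can lie neither in $K\setminus V$ (distance $\geq d$) nor in $A\cap\{|z|>R\}$ (distance $\geq d'$), so must lie in $V$. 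Hence $V$ equals the intersection of $A$ with the open $\delta$-neighborhood of $V$ in $\C$, which is open in $A$. Thus $V$ is clopen in $A$, and $f=\mathbf{1}_V$ is the desired function.
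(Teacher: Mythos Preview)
Your strategy is right and essentially the same as the paper's: both arguments produce $f$ as the indicator of a bounded clopen subset of $A$ containing $B$ and missing $q$. However, your write-up has a genuine gap at the sentence ``$V\subset\{|z|<R\}$ is compact, so $d':=\mathrm{dist}(V,\{|z|=R\})>0$.'' The quasi-component argument in $K$ only hands you \emph{some} clopen $V\subset K$ with $B\subset V$ and $q\notin V$; nothing forces $V$ to avoid the circle $\{|z|=R\}$. For instance, take $A=[0,1]\cup\{2\}\cup[3,\infty)$ on the real axis, $B=[0,1]$, $q=2$, $R=3.5$: then $K=[0,1]\cup\{2\}\cup[3,3.5]$, and $V=[0,1]\cup[3,3.5]$ is a perfectly good clopen set separating $B$ from $q$ in $K$, yet it meets $\{|z|=R\}$, so $d'=0$ and your openness argument collapses.

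The repair is exactly the step the paper carries out explicitly: use the same quasi-component fact to separate $B$ not only from $q$ but from every point of the compact set $K\cap\{|z|=R\}$, then pass to a finite intersection by compactness. The resulting clopen set lies in $\{|z|<R\}$, and from there your distance argument goes through verbatim. (Equivalently: since the quasi-component of $B$ in $K$ equals $B\subset\{|z|<R\}$, a compactness argument produces a clopen neighborhood of $B$ in $K$ contained in the open disk; intersect it with any clopen set separating $B$ from $q$.) Once this is inserted, your proof and the paper's are the same argument in slightly different packaging.
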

\begin{proof}
Connected components are closed. Hence $B$ is compact. Let $R>\max\{|z|:
z\in B\}$. 

Define $C=\{z\in A:|z|=R\}$ and for each $w\in C$ choose
disjoint open subsets $U_w,V_w$ of $A$ with $A=U_w\cup V_w$, $B\subset U_w$
and $w\in V_w$. Define $f_w$ as the indicator function of $U_w$, i.e.,
\[
f_w(z)=\begin{cases} 1 & \text{if $z\in U_w$} \\
 0 & \text{if $z\in A\setminus U_w=V_w$.} \\
\end{cases}
\]
Now $C$ is a compact set covered by the open sets $V_w$ ($w\in C$).
Hence there is a finite set $S\subset C$ with
\[
C\subset\cup_{w\in S}V_w.
\]
We define
\[
g(z)=\Pi_{w\in S}f_w(z)
\]
observing that $g\equiv 1$ on $B$ and $g\equiv 0$ on $C$.

We choose a continuous function $h:A\to\{0,1\}$ such that
$h$ equals $1$ on $B$ and $h(q)=0$ (which is possible, since
$q$ lies in a connected component of $A$ different from $B$).

Now we can define the function $f$ we are looking for as
\[
f(z)=\begin{cases}
g(z)h(z) & \text{if $z\in A$ and $|z|\le R$} \\
0 & \text{if $z\in A$ and $|z|> R$.} \\
\end{cases}
\]
The function $f$ is continuous on $A$, because $g(z)=0$ for all $z\in A$
with $|z|=R$, which implies that $g(z)h(z)=0$ for $|z|=R$.
By construction its support is contained in the closed disc of radius $R$
(and therefore compact) and we have $f\equiv 1$ on $B$ and $f(q)=0$.
\end{proof}

\begin{proposition}\label{planar-top}
Let $G\subset H\subset\C$ be open subsets.
Then the following properties are equivalent:
\begin{enumerate}
\item
$H^c=\C\setminus H$ intersects each bounded connected component of $G^c$.
\item
The restriction map from $\Cc(G^c,\Z)$ to $\Cc(H^c,\Z)$ is injective.
\item 
$H_1(G,\Z)\to H_1(H,\Z)$ is injective.
\end{enumerate}
\end{proposition}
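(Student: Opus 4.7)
The plan is to prove $(ii) \iff (iii)$ first via the isomorphism $\xi$ of the preceding proposition, then $(i) \Rightarrow (ii)$ directly, and finally $(ii) \Rightarrow (i)$ by contrapositive.

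For $(ii) \iff (iii)$, the isomorphism $\xi_G \colon H_1(G,\Z) \to \Cc(G^c,\Z)$ from the preceding proposition is natural with respect to the inclusion $i \colon G \hookrightarrow H$: for $\gamma \in H_1(G,\Z)$ the winding number $n_\gamma(z) = \int_\gamma dw/(w-z)$ is defined on all of $\C \setminus |\gamma|$, so $\xi_H(i_*\gamma)$ is simply $\xi_G(\gamma)|_{H^c}$. The square
\[
\begin{CD}
H_1(G,\Z) @>i_*>> H_1(H,\Z) \\
@V\xi_G VV @V\xi_H VV \\
\Cc(G^c,\Z) @>r>> \Cc(H^c,\Z)
\end{CD}
\]
(with $r$ the restriction map) therefore commutes, and the vertical isomorphisms identify injectivity of $i_*$ with that of $r$.

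For $(i) \Rightarrow (ii)$, take $f \in \ker r$. Being $\Z$-valued and continuous, $f$ is locally constant on $G^c$, hence constant on each connected component; compactness of $\supp f$ forces $f \equiv 0$ on every unbounded component. On a bounded component $B$, hypothesis $(i)$ supplies $p \in B \cap H^c$ at which $f(p) = 0$, and constancy of $f$ on $B$ then yields $f|_B \equiv 0$. So $f = 0$.

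For $(ii) \Rightarrow (i)$, by contrapositive, let $B$ be a bounded connected component of $G^c$ with $B \cap H^c = \emptyset$; I construct a nonzero element of $\ker r$. Fix $R > \max\{|z| : z \in B\}$. Inspection of the proof of Lemma~\ref{3.2} shows that the radius $R$ and the clopen covering $\{U_w,V_w\}_{w \in C}$ depend only on $B$ and $A = G^c$; only the auxiliary function $h$ depends on the test point $q$. Hence for every $q \in H^c$ the lemma yields $f_q \in \Cc(G^c,\Z)$ with $f_q|_B \equiv 1$, $f_q(q) = 0$, and $\supp f_q \subset \{|z| \le R\}$. Since $H^c \cap \{|z| \le R\}$ is compact and covered by the open sets $\{f_q = 0\}$ for $q \in H^c \cap \{|z|\le R\}$, a finite subcover $\{f_{q_1} = 0\},\dots,\{f_{q_n} = 0\}$ suffices; set $f := \prod_{i=1}^n f_{q_i}$. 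Then $f$ is $\{0,1\}$-valued, equals $1$ on $B$, vanishes on $H^c \cap \{|z|\le R\}$ by the finite cover, and vanishes on $\{|z| > R\}$ because every $f_{q_i}$ does. Hence $f \ne 0$ lies in $\ker r$, as required.

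The main obstacle is this last direction: Lemma~\ref{3.2} separates $B$ only from a single point, whereas one needs separation from the entire closed set $H^c$. The resolution is the observation that the support radius $R$ in Lemma~\ref{3.2} is uniform in the test point $q$, which lets the compactness of $H^c \cap \{|z| \le R\}$ reduce the problem to finitely many $f_q$'s; their product vanishes on all of $H^c$ and is still $1$ on $B$.
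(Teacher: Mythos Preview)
Your argument follows essentially the same route as the paper's: the equivalence $(ii)\iff(iii)$ via the naturality of $\xi$, the direction $(i)\Rightarrow(ii)$ by looking at components, and the direction $\neg(i)\Rightarrow\neg(ii)$ by multiplying together finitely many separating functions from Lemma~\ref{3.2} using a compactness argument. There is, however, a small technical oversight in your last step. If $H^c\cap\{|z|\le R\}=\emptyset$ (which can happen: take for instance $G^c=\overline{B(0,1)}\cup\{|z|\ge 10\}$, $B=\overline{B(0,1)}$, $R=2$, $H=\{|z|<10\}$), then your finite subcover is empty, $n=0$, and $f=\prod_{i=1}^0 f_{q_i}$ is the constant function $1$; the claim that $f$ ``vanishes on $\{|z|>R\}$ because every $f_{q_i}$ does'' is then vacuous and $f$ need not lie in $\Cc(G^c,\Z)$ at all. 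The paper avoids this by first fixing one seed function $f\in\Cc(G^c,\Z)$ with $f|_B\equiv 1$ (so compact support is guaranteed from the start), setting $C=\supp(f)\setminus H$, and only then running the compactness argument over $C$; the final $g=f\cdot\prod g_x$ inherits compact support from $f$ even when $C=\emptyset$. Your version is easily repaired in the same way: throw in one extra factor $f_{q_0}$ for any $q_0\in G^c\setminus B$ (and treat the degenerate case $G^c=B$ separately, where the indicator $1_B$ already works).
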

\begin{proof}
The equivalence of properties $(ii)$ and $(iii)$ has been shown above.

We prove the equivalence of $(i)$ and $(ii)$.
Let $B$ be a bounded connected component of $G^c$ with $B\subset H$.
Let $f\in\Cc(G^c,\Z)$ be a function which equals $1$  on $B$
and assumes only $0$ and $1$ as values.
(Such a function exists due to Lemma~\ref{3.2}).
Let 
\[
K=\supp(f)=\overline{\{z:f(z)\ne 0\}}
\]
be its support and define $C=K\setminus H$.
For every $x\in C$ we choose a function $g_x\in
\Cc(G^c,\Z )$ with $g_x(x)=0$ and $g_x\equiv 1$ on $B$.
(This is possible by Lemma~\ref{3.2}, since $B$ is compact).
Due to compactness of $C$ we may
choose a finite subset $S$ of $C$ such that 
\[
C\subset\cup_{x\in S}
\{z\in G^c:g_x(z)=0\}.
\]
Define
\[
g(z)= f(z)\cdot \Pi_{x\in S} g_x(z).
\]
Then $g$ equals one on $B$ and vanishes identically on $C$.
Since $\supp(g)\subset\supp(f)\subset K$, $C=K\setminus H$
and $g|_C\equiv 0$, it is clear that $g$ vanishes identically
on $H^c$.
Thus we have found a non-zero function 
$g\in\Cc(G^c,\{0,1\})$ whose restriction to $H^c$ is zero.
Therefore the existence of a bounded connected component $B$
of $G^c$ with $B\subset H$ implies that
the restriction homomorphism $\Cc(G^c,\Z)\to\Cc(H^c,\Z)$
is not injective.

To prove the opposite direction, let us assume that $B\cap H^c\ne\{\}$
for every bounded connected component $B$ of $G^c$.
Let $f\in\Cc(G^c,\Z)$. Since $f$ is locally constant and has compact 
support, it must vanish identically on every unbounded connected
component of $G^c$.
Thus, if $f\not\equiv 0$, there must be a bounded connected component
$B$ of $G^c$ on which $f$ is not zero. Since by assumption $B\cap H^c$
is not empty, it follows that the restriction of $f$ to $H^c$
is not everywhere zero. This proves injectivity.
\end{proof}

\end{document}